\documentclass[12pt,reqno]{amsart}
\usepackage{graphicx}
\usepackage{color}
\usepackage{amsmath,amssymb,amsthm,amsfonts}
\usepackage{graphicx}
\usepackage{color}
\usepackage{multicol}
\usepackage{bm}

\usepackage{mathptmx}
\usepackage{enumerate}
\usepackage[numbers,sort&compress]{natbib}
\numberwithin{equation}{section}
\newcommand{\R}{{\mathbb R}}

\newcommand{\be}{\begin{equation}}
\newcommand{\ee}{\end{equation}}

\newcommand{\ben}{\begin{eqnarray*}}
\newcommand{\enn}{\end{eqnarray*}}

\newcommand{\Om}{\Omega}

\newcommand{\na}{\nabla}

\newtheorem{theorem}{\textbf Theorem}[section]
\newtheorem{lemma}{\textbf Lemma}[section]
\newtheorem{rem}{\textbf Remark}[section]

\newtheorem{prop}{\textbf Proposition}[section]

\def\endProof{{\hfill$\Box$}}
\begin{document}

\author{Qianqian Hou}
\address{Institute for Advanced Study in Mathematics, Harbin Institute of Technology, Harbin 150001, People's Republic of China}
\email{qianqian.hou@hit.edu.cn}

\title[Global solvability to a two-dimensional chemotaxis-Euler system]{Global solvability to a two-dimensional chemotaxis-Euler system with Robin boundary conditions on oxygen}

\begin{abstract}
This paper is concerned with the global solvability of a chemotaxis-Euler system in bounded domains of $\mathbb{R}^2$. Completing the system with physical boundary conditions, we firstly prove local well-posedness of the corresponding initial-boundary value problem and then present some blow-up criteria. We also show that the local solutions can be extended globally in time under suitable smallness assumptions on initial and boundary data.
\end{abstract}

\subjclass[2010]{35A01, 35K57, 35Q92, 92C17}

\keywords{Chemotaxis, Euler equations, Global well-posedness, Robin boundary conditions}
\maketitle

\section{Introduction}
\subsection{Background and literature review}
Chemotaxis is termed as the oriented movement of biological species towards a higher (or lower) concentration of a chemical substance. On account of its fundamental contributions to biological processes, various mathematical models have been proposed (cf. \cite{patlak1953, keller-segel1970, KS71a}). In particular, considering that some of the chemotaxis occurs in fluid the following coupled chemotaxis-fluid model was proposed by Tuval \emph{et al.} in \cite{tuval2005} based on an oxytactic bacteria experiment:
\be\label{e1}
\left\{
\begin{array}{lll}
n_t+u\cdot\nabla n+\nabla\cdot(\chi(c)n \nabla c)=D_n \Delta n, \quad  \ \ \quad x\in \Om,\ \ t>0,
\\
c_t+u\cdot \nabla c+n f(c)=D_c\Delta c,\quad\quad \ \qquad\quad\  \, \, \,\, x\in \Om,\ \ t>0,\\
u_t+u\cdot\nabla u+\nabla p+n\nabla \phi=D_u\Delta u,\quad\quad \ \ \ \ \  \ \ x\in \Om,\ \ t>0,\\
\nabla\cdot u=0,\qquad\qquad\qquad\qquad\quad\ \ \  \ \ \ \  \quad \quad \ x\in \Om,\ \ t>0,
\end{array}
\right.
\ee
where the domain $\Omega\subset \R^{d}$ with $d\geq 1$. Here, $n(x,t)$, $c(x,t)$ denote the bacteria density and oxygen concentration, respectively; $u(x,t)$ and $p(x,t)$ represent the fluid velocity and the associated pressure. The non-negative constants $D_n$, $D_c$ and $D_u$ are diffusion coefficients. The first two equations in \eqref{e1} describe the evolution of bacteria and oxygen, where $\nabla\cdot(\chi(c)n \nabla c)$ accounts for the chemotactic response of bacteria to oxygen with chemotactic intensity $\chi(c)>0$ and $nf(c)$ stands for the oxygen consumed by bacteria with consumption rate $f(c)>0$. The last two equations are the incompressible Navier-Stokes equations when $D_u>0$ (reducing to the Euler equations when $D_u=0$) with an additional term $n\nabla \phi$ corresponding to the gravitational force exerted by the bacteria on the fluid with the given gravitational potential $\phi(x)$ independent of $t$. Model \eqref{e1} has attracted a lot of mathematical analysis since it was proposed in 2005, and we next briefly review some previous results on well-posedness. 
~\\
~\\
\textbf{Fully diffusive case.} Most of the previous studies on well-posedness of \eqref{e1} focused on the fully diffusive case, where $D_n, D_c,D_u>0$. For the Cauchy problem in $\Omega=\R^2$,  global weak solutions were constructed in \cite{duan-lorz-markowich2010}, \cite{liu-lorz2011} and uniqueness of such weak solutions were proved in \cite{chae-kang-li2012}, \cite{zhang-zheng2014} under certain structural conditions on $\chi$ and $f$. Chae-Kang-Li later established some blow-up criteria and based on these blow-up criteria they showed that local classical solutions can be extended globally under the assumptions that $\chi$ and $f$ fulfill the structural requirements stated in \cite{liu-lorz2011} or that $\|c(t=0)\|_{L^\infty}$ is sufficiently small in the prototype case $\chi(c)=1$, $f(c)=c$. For the case $\Om=\R^3$, well-posedness results available so far are merely confined to local and global small classical solutions (cf. \cite{chae-kang-li2012, chae-kang-li2014, duan-lorz-markowich2010}). In the case that $\Om$ is a bounded domain in $\R^d$, $d\geq 1$, with the influence of the surroundings on the fluid oxygen concentration neglected, Neumann boundary conditions were imposed on $n$ and $c$, under which numerous well-posedness results have been derived. Local weak solutions were given in \cite{lorz2010} when $\chi$ is a constant and $f$ is monotonically increasing with $f(0)=0$. Under some structural hypotheses on $\chi$ and $f$ that includes the prototype case $\chi=1$, $f(c)=c$, Winkler established global existence of weak solutions in the 3D case (cf. \cite{winkler2012, winkler2016}) and of smooth solutions in the 2D case (cf. \cite{winkler2012}). These solutions stabilize to the spatially uniform equilibria $(\bar{m}_0,0,\mathbf{0})$ as $t$ goes to infinity, where $\bar{m}_0=\frac{1}{|\Om|}\int_\Om m(x,0)dx$ (cf. \cite{winkler2014arma, zhang-li2015, winkler2017}). Since part of the fluid is exposed to air in the experiment \cite{tuval2005}, leading to inevitable oxygen exchange between the dissolved phase and the surrounding free phase, it is more realistic to adopt the following Robin boundary condition on $c$:
\be\label{b}
\begin{split}
\nabla c\cdot \nu=\kappa(x)(\gamma(x)-c),\qquad (\nabla n-\chi(c)n\na c)\cdot \nu=0,\qquad u=\bold{0},
\qquad x\in \Gamma,\ \ t>0,
\end{split}
\ee
where $\Gamma=\partial \Om$, $\nu(x)$ is the unit outward normal vector at $x\in \Gamma$ and $\kappa(x),\gamma(x)$ are given nonnegative functions defined on $\Gamma$. Under such boundary conditions, authors in \cite{braukhoff2017,braukhoff-tang2020} proved the global existence of classical solutions in two-dimensional case and of weak solutions in three-dimensional case. Well-posedness of nonconstant stationary solutions for the fluid-free version of \eqref{e1} in dimension $d\geq 1$ were derived in \cite{braukhoff-lankeit2019} and global classical solutions to the corresponding parabolic-elliptic system were later proved approaching to these stationary solutions in the large time limit (cf. \cite{fuest-lankeit-mizukami2021}). 
Besides the Robin boundary conditions, Dirichlet boundary conditions on oxygen have also been adopted and the study on well-posedness of solutions with such boundary conditions has been conducted in \cite{wang-xiang2021, wx2022, wang-xiang2024}. Replacing the linear cell diffusion in the first equation of \eqref{e1} with the nonlinear diffusion $\Delta n^\alpha$, $(\alpha>1)$, one derives the chemotaxis-Navier-Stokes driven by porous medium diffusion. On well-posedness of such systems we refer the reader to \cite{tao-winkler2013,tian-xiang2020, wu-xiang2020, jin2021, jin2024, tian-xiang2023, zheng-ke2022, winkler2018} and the reference therein.
~\\
~\\
\textbf{Partially or fully non-diffusive case.} Compared to the fully diffusive case, the knowledge of well-posedness for the partially or fully non-diffusive versions of \eqref{e1} is far less developed. For the fully non-diffusive case, i.e. $D_n=D_c=D_u=0$ in \eqref{e1}, Jeong-Kang established the well-posedness of local solutions in $\Omega=\mathbb{T}^d$ or $\mathbb{R}^d$ (cf. \cite{jeong-kang2022}). When $D_n, D_u>0$ and $D_c=0$, global classical solutions in $\R^2$ were derived in \cite{chae-kang-li2014} under the assumption that $\|n(t=0)\|_{L^1}<\sigma$, with the constant $\sigma$ depending on $\|c(t=0)\|_{L^\infty}$. For the chemotaxis-Euler equations, i.e. the case of $D_n, D_c>0$, $D_u=0$, Na derived global classical solutions in $\mathbb{R}^2$ provided that $\|c(t=0)\|_{L^\infty}$ is sufficiently small (cf. \cite{na2024}). For singularity formation of \eqref{e1} in the fully non-diffusive case, we refer the reader to \cite{jeong-kang2022, na20241}.
\subsection{Motivation and main results.} As mentioned above, the only well-posedness result on the chemotaxis-Euler equations is given by Na in \cite{na2024}, where the local and global classical solutions with small $\|c(t=0)\|_{L^\infty}$ are derived in the case $\Omega=\R^2$, but no blow-up criteria are established. In this paper, we aim to provide some blow-up criteria for local solutions of the chemotaxis-Euler equations in bounded domains $\Omega\subset \mathbb{R}^2$ under the physical boundary condition on $c$ in \eqref{b}. 
In line with the experiment in \cite{tuval2005}, we set $\chi(c)=1$, $f(c)=c$ in \eqref{e1} and study the following initial boundary value problem:
\be\label{cns}
\left\{
\begin{array}{lll}
n_t+u\cdot\nabla n+\nabla\cdot(n \nabla c)= \Delta n, \quad  \ \ \quad x\in \Om,\ \ t>0,
\\
c_t+u\cdot \nabla c+n c=\Delta c,\quad\quad \ \qquad\quad\   x\in \Om,\ \ t>0,\\
u_t+u\cdot\nabla u+\nabla p=-n\nabla \phi,\quad\quad \ \quad \ x\in \Om,\ \ t>0,\\
\nabla\cdot u=0,\qquad\qquad\qquad\qquad \quad \quad \ x\in \Om,\ \ t>0\\
\end{array}
\right.
\ee
with
\be\label{bc}
\begin{split}
&n(x,0)=n_0(x),\qquad  c(x,0)=c_0(x),\qquad u(x,0)=u_0(x),\qquad x\in \Om,\\
\nabla c\cdot \nu&=\kappa(x)(\gamma(x)-c),\qquad (\nabla n-n\na c)\cdot \nu=0,\qquad u\cdot\nu=0,
\qquad x\in \Gamma,\ \ t>0,
\end{split}
\ee
where we have taken $D_n=D_c=1$ without loss of generality, $\Om\subset \mathbb{R}^2$ is a bounded domain with smooth boundary $\Gamma=\partial \Om$, $\nu(x)$ is the unit outward normal vector at $x\in \Gamma$ and $\kappa(x),\gamma(x)$ are given nonnegative functions defined on $\Gamma$. The no-penetration boundary condition is imposed on $u$, since we are considering the Euler equations.

Our first result concerns the local well-posedness of \eqref{cns}-\eqref{bc}. Its proof is based on Banach's fixed point theorem and will be postponed to the appendix.
\begin{prop}\label{p1} Let 
$n_0, c_0\in H^2(\Om), u_0\in H^3(\Om), \kappa\in H^{\frac{3}{2}}(\Gamma), \gamma\in H^{\frac{5}{2}}(\Gamma)$ and $\na\phi\in W^{3,\infty}(\Omega)$
with  $n_0, c_0, \kappa, \gamma \geq 0$ satisfying the following compatibility conditions:
\ben
\begin{split}
\nabla c_0\cdot \nu=\kappa(x)(\gamma(x)-c_0),\ \  (\nabla n_0-n_0\na c_0)\cdot \nu=0,\ \  u_0\cdot\nu=0\ \ \text{on}\ \ \Gamma;\quad \na\cdot u_0=0\ \ \text{in}\ \ \Omega.
\end{split}
\enn
Then there exists a $T_{loc}>0$ such that \eqref{cns}-\eqref{bc} admits a unique solution $(n,c,u,\na p)$ with $n, c\geq 0$ fulfilling
\be\label{b11}
\begin{split}
&(n,c, u,\na p)\in C([0,T_{loc}]; H^2(\Omega)\times H^2(\Omega)\times H^3(\Omega)\times H^2(\Omega)),\\
&\ \ \ \quad\  (n,c,\na p)\in L^2(0,T_{loc}; H^3(\Omega)\times H^3(\Omega)\times H^3(\Omega)).
\end{split}
\ee
\end{prop}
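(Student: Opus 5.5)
We will prove Proposition \ref{p1} with Banach's fixed point theorem on a closed ball of a space adapted to \eqref{b11}. Fix $T>0$ and $M>0$ to be chosen later. Let $X_T$ be the set of triples $(\tilde n,\tilde c,\tilde u)$ with
\[
\sup_{t\le T}\big(\|\tilde n\|_{H^2}^2+\|\tilde c\|_{H^2}^2+\|\tilde u\|_{H^3}^2\big)+\int_0^T\big(\|\tilde n\|_{H^3}^2+\|\tilde c\|_{H^3}^2+\|\tilde n_t\|_{H^1}^2+\|\tilde c_t\|_{H^1}^2\big)dt\le M^2 ,
\]
together with $\na\cdot\tilde u=0$, $\tilde u\cdot\nu=0$ on $\Gamma$, and the initial data $(n_0,c_0,u_0)$.

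Given $(\tilde n,\tilde c,\tilde u)\in X_T$, we define $(n,c,u)=\Phi(\tilde n,\tilde c,\tilde u)$ by solving three decoupled linear problems.
\begin{enumerate}[(i)]
\item The linear parabolic problem
\[
c_t-\Delta c=-\tilde u\cdot\na\tilde c-\tilde n\tilde c,\qquad \na c\cdot\nu+\kappa c=\kappa\gamma \ \text{on}\ \Gamma .
\]
\item The linear parabolic problem
\[
n_t-\Delta n=-\tilde u\cdot\na \tilde n-\na\cdot(\tilde n\na\tilde c),\qquad \na n\cdot\nu=\tilde n\,\na c\cdot\nu=\tilde n\kappa(\gamma-c)\ \text{on}\ \Gamma .
\]
\item The linear transport (Euler-type) problem
\[
u_t+\tilde u\cdot\na u+\na p=-\tilde n\na\phi,\qquad \na\cdot u=0,\qquad u\cdot\nu=0 .
\]
\end{enumerate}

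For (i) and (ii) we use standard maximal regularity for the heat equation with Robin and inhomogeneous Neumann data. Time-differentiate, test with $c_t$ (resp.\ $n_t$), and apply elliptic $H^2$/$H^3$ regularity for the Robin/Neumann problem. This requires $\kappa\in H^{3/2}(\Gamma)$, $\gamma\in H^{5/2}(\Gamma)$, and the boundary datum $\tilde n\kappa(\gamma-c)\in L^2(0,T;H^{3/2}(\Gamma))\cap H^1(0,T;H^{-1/2}(\Gamma))$, which follows from trace estimates and the product structure in 2D. The compatibility conditions in the statement guarantee that $c_t(0)$ and $n_t(0)$ lie in $L^2$ with the right boundary behaviour. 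These estimates give $(n,c)$ in the $H^2$/$H^3$ classes with bounds $C(\text{data})+C\,T^{\alpha}P(M)$ for some $\alpha>0$ and polynomial $P$.

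For (iii) we take the curl, which gives the vorticity transport equation
\[
\omega_t+\tilde u\cdot\na\omega=-\na^\perp\cdot(\tilde n\na\phi).
\]
Since $\tilde u\cdot\nu=0$, there is no boundary flux, so we get $H^2$ estimates on $\omega$ with the forcing in $L^2(0,T;H^2)$ because $\tilde n\in L^2H^3$ and $\na\phi\in W^{3,\infty}$. Then $u\in H^3$ follows from the div-curl elliptic estimate with $u\cdot\nu=0$; in a multiply connected domain we also control the circulations. The pressure $\na p\in H^2$ (and $L^2H^3$) comes from the Neumann problem
\[
\Delta p=-\na\cdot(\tilde u\cdot\na u+\tilde n\na\phi),\qquad \partial_\nu p=\text{boundary terms from } u\cdot\nu=0 .
\]
Existence for the linear transport problem is standard, via characteristics or Galerkin.

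Choosing $M$ large relative to the data and then $T$ small makes $\Phi$ map $X_T$ to itself. Contraction is shown in the weaker norm $L^\infty(0,T;L^2\times H^1\times L^2)\cap L^2(0,T;H^1\times H^2)$ for differences. The ball is closed in this weaker topology by weak-$*$ lower semicontinuity, so Banach's theorem applies, and the fixed point has the regularity \eqref{b11}. Uniqueness follows from the same difference estimate plus Gronwall. Nonnegativity of $c$ follows by testing the $c$ equation with $c_-$: the Robin term contributes $\int_\Gamma\kappa\gamma c_-\ge0$ on the favourable side, and the term $nc$ is handled with $n$ bounded. Nonnegativity of $n$ follows by testing with $n_-$, where the no-flux condition kills the boundary term and $\int n_-\na c\cdot\na n_-$ is absorbed via Gronwall.

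The main obstacle is the loss of derivatives between the $n$-equation and the Euler part. The velocity $u\in H^3$ needs $\tilde n\in L^2H^3$, while the $n$-equation at the $H^2$ level uses $\tilde u\cdot\na\tilde n$ and $\na\cdot(\tilde n\na\tilde c)$, requiring $\tilde c\in L^2H^3$. The nonlinear Robin/no-flux boundary coupling also requires careful trace estimates at the $H^{3/2}(\Gamma)$ level. If the boundary coupling causes trouble, I would switch to the substitution that solves the $n$-equation with the true $c$ from step (i), keeping the no-flux condition linear in $n$.
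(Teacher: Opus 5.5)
Your route differs from the paper's, which first substitutes $\tilde n=ne^{-c}$ and $\tilde c=e^{g_1}(g_2-c)$ so that both boundary conditions become homogeneous Neumann. Some of your ingredients are fine: the nonlinear flux datum $\tilde n\kappa(\gamma-c)$ is lower order, and contracting in the weaker norm works.

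The genuine gap is the claimed self-map bound $C(\text{data})+CT^{\alpha}P(M)$ for problem (ii). It fails because you take both factors of the chemotactic term $\nabla\cdot(\tilde n\nabla\tilde c)$ from the previous iterate. To get $n\in L^2_TH^3$ you need this forcing in $L^2_TH^1$. Its top-order part $\tilde n\,\nabla\Delta\tilde c$ is controlled only by $\|\tilde n\|_{L^\infty_TL^\infty_x}\|\tilde c\|_{L^2_TH^3_x}$, and neither factor is small:
\begin{itemize}
\item $\|\tilde n\|_{L^\infty_TL^\infty_x}$ is of size $\|n_0\|_{L^\infty}$, since $\tilde n(0)=n_0$.
\item Your ball is invariant under parabolic rescaling at top order. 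A perturbation $\chi(t/T)w_T(x)$, with $\chi(0)=0$ and $w_T$ oscillating at frequency $T^{-1/2}$ with amplitude $\varepsilon MT$, stays in the ball, yet its $L^2_TH^3$ norm is $\sim\varepsilon M$ for every $T$.
\end{itemize}
What you actually obtain is
\[
\|n\|\le C(\text{data})+C\|n_0\|_{L^\infty}M+CT^{\alpha}P(M).
\]
This is $\le M$ only if $C\|n_0\|_{L^\infty}<1$, so for large data $\Phi$ need not map the ball into itself. The obstacle is therefore not derivative loss (the derivative count closes). It is the absence of a small factor in a scaling-critical term.

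The paper avoids this with a Gauss--Seidel choice. In the $\tilde n$-equation of its linear scheme, the chemotactic terms are evaluated at the freshly computed $\Phi_2$, so they cost $\|\tilde n\|_{L^\infty_TH^2_x}\|\Phi_2\|_{L^2_TH^3_x}$. This is small for small $T$, because the $\Phi_2$-equation has only subcritical forcing and the free evolution of the initial datum has $L^2_TH^3$ norm tending to $0$ as $T\to0$ for fixed data.

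Your closing fallback is essentially this fix: solve for $n$ with the new $c$ from (i), linearly in $n$, with $(\nabla n-n\nabla c)\cdot\nu=0$. But it has to be the main scheme, not a contingency for the boundary. You then need an estimate for the linear variable-coefficient problem in which $n\nabla\Delta c$ (or $n\nabla c_t$ after time differentiation) is absorbed. This can be done either via the smallness of $\|c\|_{L^2_TH^3}$, or by Gronwall with integrable weights such as $\|c\|_{H^3}^2$ or $\|\nabla c_t\|_{L^2}^2$.

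If you keep the time-differentiation approach, linearity in $n$ is really required. There the term $\int\tilde n_t\nabla\tilde c\cdot\nabla n_t$ is also critical: it costs about $\|\nabla c_0\|_{L^4}M$, and this does not improve if only $\tilde c$ is replaced by the new $c$.

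Two smaller points:
\begin{itemize}
\item The curl of $\tilde u\cdot\nabla u$ is $\tilde u\cdot\nabla\omega+\partial_1\tilde u\cdot\nabla u_2-\partial_2\tilde u\cdot\nabla u_1$, not $\tilde u\cdot\nabla\omega$. The extra term is lower order and harmless in $H^2$.
\item The time-differentiated estimates for (i) and (ii) need control of $\tilde u_t$, which your ball does not provide. Either add it, or use maximal regularity directly as the paper does.
\end{itemize}
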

We next present our main result, a blow-up criterion on the local solutions derived in Proposition \ref{p1}.
\begin{theorem}\label{t1}
 Let the assumptions in Proposition \ref{p1} hold. If $T^*$, the maximal time of existence in Proposition \ref{p1}, is finite, then  
 \be\label{a00}
 \|n\|_{L^q(0,T^*;L^p(\Omega))}=\infty,\quad \frac{2}{p}+\frac{2}{q}\leq 1,\quad 2<p\leq \infty.
 \ee
\end{theorem}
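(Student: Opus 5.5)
We argue by contradiction. Suppose $T^*<\infty$ but $\|n\|_{L^q(0,T^*;L^p(\Omega))}<\infty$ for some admissible pair $(p,q)$. We will show that
\[
\sup_{t<T^*}\bigl(\|n\|_{H^2}+\|c\|_{H^2}+\|u\|_{H^3}\bigr)<\infty .
\]
Proposition \ref{p1} can then be restarted at a time close to $T^*$ with a uniform existence time, which contradicts maximality. The compatibility conditions hold along the flow because the solution satisfies the boundary conditions. The bounds are built in stages, from low to high regularity.

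\textbf{Basic bounds.}
\begin{itemize}
\item Integrating the $n$-equation with the no-flux condition gives $\|n(t)\|_{L^1}=\|n_0\|_{L^1}$. We also have $n\ge0$.
\item For $c$, the comparison principle gives $0\le c\le M:=\max\{\|c_0\|_{L^\infty},\|\gamma\|_{L^\infty}\}$. The reason is that $nc\ge0$, $u\cdot\nu=0$, and at a boundary maximum exceeding $\gamma$ the Robin condition forces $\partial_\nu c\le0$.
\item Testing the $c$-equation against $c$ gives $\nabla c\in L^2(0,T^*;L^2)$. Testing against $-\Delta c$ is not used here, because $u$ is not controlled yet.
\end{itemize}

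\textbf{Controlling $u$.} This is the main obstacle. In 2D, the vorticity $\omega=\nabla^\perp\cdot u$ satisfies
\[
\omega_t+u\cdot\nabla\omega=-\nabla^\perp n\cdot\nabla\phi ,
\]
so $\omega$ is forced by a derivative of $n$. The bounds on $u$ therefore require $\nabla n\in L^1(0,T^*;L^r)$, and the $n$-estimates in turn see $u$ only through the transport term, which is divergence free and tangential. This gives the decoupling:
\begin{itemize}
\item The energy estimates for $n$ in $L^r$ and for $\nabla c$ do not involve $\nabla u$ at lowest order, since $\int u\cdot\nabla n\, n^{r-1}=0$.
\item For $\nabla c$ we cannot use the standard $|\nabla c|^2$-functional with $u\cdot\nabla c$ when $u$ is uncontrolled. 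Instead, we use maximal parabolic regularity for the $c$-equation, viewing $u\cdot\nabla c$ as a perturbation.
\end{itemize}
I would first try the following order. Test the $n$-equation with $n^{r-1}$:
\[
\frac{d}{dt}\int n^r+\frac{4(r-1)}{r}\int|\nabla n^{r/2}|^2=(r-1)\int n^{r-1}\nabla n\cdot\nabla c .
\]
Write the right-hand side as $\int \nabla n^{r}\cdot\nabla c$ and integrate by parts. This produces $-\int n^r\Delta c$ plus a boundary term $\int_\Gamma n^r\kappa(\gamma-c)$, which is handled by a trace inequality. Then substitute $\Delta c=c_t+u\cdot\nabla c+nc$. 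This reintroduces $u$, so this route needs care.

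The cleaner route is to estimate $\|\nabla c\|_{L^4}$ through the Duhamel formula for the Robin heat semigroup, treating $u\cdot\nabla c+nc$ as a source. The $nc$ part is controlled by $\|n\|_{L^p}$, and the Serrin-type condition $2/p+2/q\le1$ is exactly what makes
\[
\int_0^t (t-s)^{-1/2-1/p}\,\|n(s)\|_{L^p}\,ds
\]
finite. The remaining term is $u\cdot\nabla c=\nabla\cdot(uc)$, and $\|uc\|_{L^\infty}\le M\|u\|_{L^\infty}$. Closing the estimate then requires $\|u\|_{L^\infty}$ to grow at most like $\|\omega\|_{L^2\cap L^\infty}$ with a logarithmic factor. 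We close the coupled system with a Gronwall inequality of the form
\[
Y(t)=\|n\|_{L^2}^2+\|\nabla c\|_{L^4}^4+\|\omega\|_{L^2}^2+\cdots ,
\]
using 2D Gagliardo--Nirenberg inequalities and the Brezis--Gallouet/Beale--Kato--Majda inequality
\[
\|\nabla u\|_{L^\infty}\le C\bigl(1+\|\omega\|_{L^\infty}\log(e+\|u\|_{H^3})\bigr),
\]
which is valid for the slip boundary condition on smooth bounded domains.

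\textbf{Higher-order bounds.} Once $n\in L^\infty(0,T^*;L^2)\cap L^2(0,T^*;H^1)$ and $\nabla c\in L^\infty(0,T^*;L^4)$, bootstrap as follows:
\begin{itemize}
\item Test the $n$-equation with $-\Delta n$ and with $n_t$, obtaining $\nabla n\in L^\infty(0,T^*;L^2)\cap L^2(0,T^*;H^1)$. For the first step of $\omega$ this needs only $\|u\|_{L^\infty}$, which comes from $\omega\in L^\infty(0,T^*;L^2\cap L^4)$ via log-type bounds.
\item Transport estimates then give $\omega\in L^\infty(0,T^*;L^\infty)$, since $\nabla n\in L^1(0,T^*;L^\infty)$ follows from $n\in L^2(0,T^*;H^{2})$ by parabolic regularity with Robin/no-flux data.
\item The Beale--Kato--Majda inequality together with the $H^3$ energy estimate for $u$ (forcing $n\nabla\phi\in L^2(0,T^*;H^3)$ requires $n\in L^2(0,T^*;H^3)$) closes the estimate with a log-Gronwall argument.
\item Finally, $H^2$ bounds for $n$ and $c$ follow from standard parabolic estimates, using $\kappa\in H^{3/2}(\Gamma)$, $\gamma\in H^{5/2}(\Gamma)$, and the trace theorem for the boundary terms.
\end{itemize}

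The hardest point is the first closure: handling $u\cdot\nabla c$ in the $\nabla c$ and $n$ estimates without a priori control of $u$, because $u$ depends on $\nabla n$. This is where I expect the log-Gronwall coupling to be needed.
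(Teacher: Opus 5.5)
Your outer framework matches the paper: contradiction plus continuation via Proposition \ref{p1}, vorticity transport, and a Brezis--Gallou\"et/BKM log-Gronwall for $\|u\|_{H^3}$ at the very end. The gap is the step you yourself flag as hardest, the first closure. You do not carry it out, and the mechanism you sketch for it does not work.

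\emph{Why the sketched mechanism fails.} Writing $u\cdot\nabla c=\nabla\cdot(uc)$ in the Duhamel formula for $\nabla c$ produces the kernel $\nabla e^{(t-s)\Delta}\nabla\cdot$. Its operator norm behaves like $(t-s)^{-1}$, which is not integrable, so $\|uc\|_{L^\infty}\le M\|u\|_{L^\infty}$ gives no bound on $\|\nabla c\|_{L^4}$. Any logarithmic bound on $u$ in terms of $\|\omega\|_{L^\infty}$ is circular at this stage. By the vorticity equation, $\omega\in L^\infty_tL^\infty_x$ requires $\nabla n\in L^1_tL^\infty_x$, which is exactly what you do not yet know; the ``$\cdots$'' in your $Y(t)$ hides this. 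Also, at the endpoint $2/p+2/q=1$, H\"older does not make $\int_0^t(t-s)^{-1/2-1/p}\|n(s)\|_{L^p}\,ds$ finite; that needs $2/p+2/q<1$. So that integral is not where the Serrin condition enters.

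\emph{The missing idea.} The $\|\nabla c\|_{L^2}^2$ functional that you dismiss does close, once you add $\|u\|_{L^2}^2+\|\omega\|_{L^2}^2$ and $\|n\|_{L^2}^2$ to it (Lemmas \ref{l2} and \ref{l8}). The terms are handled as follows.
\begin{itemize}
\item Vorticity: since $\frac{d}{dt}\|\omega\|_{L^2}^2\le C\|\nabla n\|_{L^2}\|\omega\|_{L^2}$, the forcing is absorbed by the dissipation $\|\nabla n\|_{L^2}^2$ from the $n$-equation.
\item Convection in the $\nabla c$ estimate: after the transport part cancels, it reduces to $-\int_\Omega\partial_ic\,\partial_iu_j\,\partial_jc\,dx\le\|\nabla u\|_{L^2}\|\nabla c\|_{L^4}^2$. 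Here $\|\nabla c\|_{L^4}^2\lesssim\|c\|_{L^\infty}\|\nabla^2c\|_{L^2}$ up to lower order, and $\|\nabla u\|_{L^2}\lesssim\|u\|_{L^2}+\|\omega\|_{L^2}$. So this needs only $u\in H^1$ and your bound $0\le c\le M$.
\item Chemotaxis: $\int_\Omega n\nabla c\cdot\nabla n\,dx\le\|n\|_{L^p}\|\nabla n\|_{L^2}\|\nabla c\|_{L^{2p/(p-2)}}$, with $\|\nabla c\|_{L^{2p/(p-2)}}\lesssim\|c\|_{L^\infty}^{1-2/p}\|\nabla^2c\|_{L^2}^{2/p}$. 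This leaves the forcing $\|n\|_{L^p}^{2p/(p-2)}$, which lies in $L^1(0,T^*)$ exactly when $2/p+2/q\le1$. This is where the Serrin condition enters.
\item Boundary: the Robin condition produces the term $\frac{1}{2}\int_\Gamma\partial_\nu(\partial_\nu c)^2\,dS$ in the $\nabla c$ estimate. The paper controls it in Lemma \ref{l0} via the Laplace--Beltrami decomposition; your plan does not address it.
\end{itemize}
A linear Gronwall then gives $u\in L^\infty H^1$, $n\in L^\infty L^2\cap L^2H^1$ and $\nabla c\in L^\infty L^2\cap L^2H^1$. The bound $u\in L^\infty L^s$ for all $s<\infty$ is what makes the later semigroup estimate for $\nabla c$ in $L^r$ work. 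There $u\cdot\nabla c$ is kept in non-divergence form and bounded by $\|u\|_{L^4}\|\nabla c\|_{L^4}$.

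\emph{Two smaller errors in the bootstrap.} First, $n\in L^2(0,T^*;H^2)$ does not give $\nabla n\in L^1(0,T^*;L^\infty)$ in two dimensions, because $H^1\not\hookrightarrow L^\infty$. The paper first obtains $n\in L^2H^3$ from the $n_t,c_t$ estimates (Lemma \ref{l6}). Second, testing the $n$-equation with $-\Delta n$ needs only $\|u\|_{L^4}$, which comes from $u\in L^\infty H^1$. It does not need an $L^\infty$ bound on $u$ from logarithmic inequalities.
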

As a consequence of Theorem \ref{t1}, we show that the local solutions given in Proposition \ref{p1} can be extended globally, under a suitable smallness condition on the initial and boundary data.
\begin{theorem}\label{t2}
 Let the assumptions in Proposition \ref{p1} hold. Assume further that
 \be\label{a0}
 \max\{\|\gamma\|_{L^\infty(\Gamma)},\|c_0\|_{L^\infty(\Om)}\}\leq \frac{1}{48}.
 \ee
 Then the unique solution derived in Proposition \ref{p1} exists globally in time, that is, for each $T>0$
 \ben
\begin{split}
&(n,c, u,\na p)\in C([0,T]; H^2(\Omega)\times H^2(\Omega)\times H^3(\Omega)\times H^2(\Omega)),\\
&\ \ \ \quad\ (n,c, \na p)\in L^2(0,T; H^3(\Omega)\times H^3(\Omega)\times H^3(\Omega)).
\end{split}
\enn
\end{theorem}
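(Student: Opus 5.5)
Theorem \ref{t2} will follow from Theorem \ref{t1} by contradiction. Suppose $T^*<\infty$. It is enough to exhibit one admissible pair $(p,q)$ in \eqref{a00} with $\|n\|_{L^q(0,T^*;L^p)}<\infty$. I will aim for $p=q=4$, which satisfies $\frac{2}{4}+\frac{2}{4}=1$. The plan is to bound $\sup_t\|n\|_{L^2}$ and $\int_0^{T^*}\|\na n\|_{L^2}^2$ in terms of $T^*$ and the data. Then Gagliardo--Nirenberg gives $\|n\|_{L^4}^4\le C\|n\|_{L^2}^2\|n\|_{H^1}^2$, hence $n\in L^4(0,T^*;L^4)$, contradicting \eqref{a00}.

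\textbf{Step 1: maximum principle for $c$.} Set $M=\max\{\|\gamma\|_{L^\infty(\Gamma)},\|c_0\|_{L^\infty}\}$. Since $n\ge0$ and $c\ge0$, the function $c$ is a subsolution of the transport-diffusion equation $c_t+u\cdot\na c=\Delta c$, with $\na\cdot u=0$ and $u\cdot\nu=0$. I test the $c$-equation against $(c-M)_+^{k}$, or argue through the comparison principle. On $\Gamma$, where $c>M\ge\gamma$, the Robin term gives $\na c\cdot\nu=\kappa(\gamma-c)\le0$, so it has the favourable sign. Hence $0\le c\le M\le\frac1{48}$ on $\Om\times[0,T^*)$.

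\textbf{Step 2: weighted energy for $n$ (main obstacle).} Following the small-$\|c\|_{L^\infty}$ strategy of Chae--Kang--Li and Na, I will use a weight $\psi(c)$ to absorb the cross term. A plain $L^2$ estimate produces $\int n\na n\cdot\na c$, which requires $\|\na c\|_{L^\infty}$ or a bound in $L^4$ that we do not have. I will therefore compute $\frac{d}{dt}\int_\Om n^2\psi(c)$ with $\psi(c)=e^{\beta c^2}$, or a similar function, and choose the constant so that the quadratic form in $(\na n,\,n\na c)$ is coercive. This works because $c\le\frac1{48}$, and the explicit constant $\frac1{48}$ is presumably tuned here. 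The transport terms vanish after integration by parts because $\na\cdot u=0$ and $u\cdot\nu=0$. The zero-flux condition on $n$ removes the boundary terms coming from the $n$-equation.

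The delicate point is the boundary contribution from the $c$-equation, which has the form $\int_\Gamma n^2\psi'(c)\kappa(\gamma-c)$. It is controlled by the trace inequality $\|n\|_{L^2(\Gamma)}^2\le\varepsilon\|\na n\|_{L^2}^2+C_\varepsilon\|n\|_{L^2}^2$ together with $\kappa\in H^{3/2}(\Gamma)\subset L^\infty(\Gamma)$. The term $-n c\,\psi'$ has a good sign when $\psi'\ge0$. If this weighted estimate fails to close, my fallback is an $\int n\ln n$ estimate combined with $\int|\na c|^2/c$-type or $\int|\na c|^4$ functionals, as in Winkler's two-dimensional approach. The boundary terms there would be handled with $\partial_\nu|\na c|^2\le C|\na c|^2$ plus Robin corrections. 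The goal in either route is
\[
\sup_{t<T^*}\int_\Om n^2+\int_0^{T^*}\!\!\int_\Om |\na n|^2\le C(T^*)<\infty,
\]
obtained from a Gronwall argument. This also uses mass conservation, $\int n=\int n_0$, which follows from the zero-flux condition.

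\textbf{Step 3: conclusion.} Feeding the bound from Step 2 into the Gagliardo--Nirenberg inequality gives $\|n\|_{L^4(0,T^*;L^4)}<\infty$. This contradicts Theorem \ref{t1} with $p=q=4$, so $T^*=\infty$. The stated regularity on every $[0,T]$ then follows directly from Proposition \ref{p1}.
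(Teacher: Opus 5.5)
Your proposal is correct and is essentially the paper's proof. It argues by contradiction with Theorem~\ref{t1} at $p=q=4$, uses the maximum principle to get $0\le c\le\frac{1}{48}$, and runs a weighted estimate for $\int_\Om e^{\beta c^2}n^2\,dx$. The paper takes $\beta=12$, and the coercivity you left unchecked does hold: for the quadratic form in $(\na n,n\na c)$ it reduces to $1+8\beta^2c^2<4\beta$. The Robin boundary term is then absorbed by the trace theorem, followed by Gronwall and Gagliardo--Nirenberg. The only small difference is that you apply the trace inequality to $n$ itself, which is slightly simpler than the paper's use of the trace theorem on $n\sqrt{\theta}$ followed by absorbing the extra $(\theta')^2\theta^{-1}n^2|\na c|^2$ term.
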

\begin{rem}
Our result in Theorem \ref{t1} may be generalized to the case $\Omega=\R^2$
by slightly modifying our proof. This means that we obtain a blow-up criterion for the local solutions given in \cite[Theorem 1.1]{na2024}.
\end{rem}

Compared with classical chemotaxis models, the main difficulty in establishing the well-posedness of the coupled chemotaxis-fluid system \eqref{e1} lies in the convective transport of the oxygen by the fluid. Specifically, to control the chemotactic term $\nabla\cdot(\chi(c)n\nabla c)$ in the first equation, one is naturally led to apply the gradient operator to the second equation, thereby seeking estimates for $\na c$. This procedure, however, inevitably introduces the strongly nonlinear term $\na(u\cdot \na c)$, which requires careful handling. In the chemotaxis-Navier–Stokes system setting, this obstacle is effectively overcome by the regularizing effect provided by the dissipative term $D_u \Delta u$ in the fluid equation. By contrast, in the chemotaxis–Euler system, the lack of such dissipation renders the treatment of this term considerably more challenging. For the whole-space case, Na in \cite{na2024} successfully resolved this issue by employing the vanishing viscosity method (i.e., adding a small artificial viscosity in the fluid equation and then passing to the vanishing viscosity limit) together with a Brezis-Gallouet-Wainger type inequality. Nevertheless, since the vanishing viscosity process generates boundary layer effects in domains with a boundary, this approach is no longer applicable to the problem considered in the present work.

In view of this limitation, we adopt a different approach that works directly with the chemotaxis–Euler equations. We first establish the local well‑posedness of the system in the appendix. In Section 2, employing the energy method, Neumann heat semigroup, a series of Sobolev embedding inequalities and Brezis-Gallouet-Wainger type inequalities, we carefully handle the boundary terms and overcome the absence of dissipation in the fluid equation, and successively prove Theorem \ref{t1}. Finally, in Section 3, we extend the local solution to a global one, under the smallness assumption \eqref{a0}. 
~\\
~\\
\textbf{Notations.} For convenience, we introduce some notations that will be used in the remaining part of this paper. We denote by $C_0$ and $C_{T}$ generic constants that may change from one line to another, where $C_0$ depends only on $\Omega$ and the initial and boundary data, while $C_{T}$ may additionally depend on $T$.
  $L^q_x$ with $1\leq q\leq \infty$ represents $L^q(\Omega)$ and $H^k_x$ with $k\geq 1$ represents $H^k(\Omega)$. $L^q_{T} \boldsymbol{X}$ with $1\leq q\leq \infty$ stands for $L^q(0,T; \boldsymbol{X})$ for Banach spaces $\boldsymbol{X}$.

\section{Proof of Theorem \ref{t1}}

The proof of Theorem \ref{t1} will be given at the end of this section, based on a series of results established in the following Lemmas \ref{l0}--\ref{l7}. Lemmas \ref{l2}--\ref{l7} are devoted to justifying that the regularity result in \eqref{b11} can be extended up to $T^*$, provided that $T^*<\infty$ and \eqref{a45} holds true. Lemma \ref{l0} is to handle the boundary terms induced by the inhomogeneous boundary conditions in \eqref{bc}.

\begin{lemma}\label{l0}
Let the assumptions in Theorem \ref{t1} hold. Then there exists a constant $C_0$ such that
\be\label{a11}
\begin{split}
\frac{1}{2}&\int_{\Gamma}\partial_\nu(\partial_\nu c)^2\,dS+\int_{\Gamma}\kappa|\na_{\Gamma} c|^2\,dS+\frac{1}{2}\frac{d}{dt}\int_{\Gamma}\kappa c^2\,dS\\
\leq &\frac{1}{4}\|\na^2 c\|_{L^2_x}^2+\frac{1}{6}\|\na n\|_{L^2_x}^2+C_0(\|u\|_{H^1_x}^2+\|\na c\|_{L^2_x}^2+\|n\|_{L^2_x}^2+1)+\frac{d}{dt}\int_{\Gamma}\kappa \gamma c\,dS
\end{split}
\ee
for all $t\in (0,T^*)$, where $\na_{\Gamma} c$ is the surface gradient of $c$ on $\Gamma$.
\end{lemma}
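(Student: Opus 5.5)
The plan is to derive \eqref{a11} as an exact identity obtained from the $c$-equation restricted to $\Gamma$, and then to estimate the leftover boundary integrals with trace inequalities. Since $n,c\geq 0$ and $n\geq0$ makes $nc$ a nonnegative absorption term, the parabolic comparison principle applied to $c_t+u\cdot\na c+nc=\Delta c$ with the Robin condition in \eqref{bc} gives $0\leq c\leq \max\{\|c_0\|_{L^\infty_x},\|\gamma\|_{L^\infty(\Gamma)}\}\leq C_0$. Moreover, $\kappa\in H^{3/2}(\Gamma)\hookrightarrow C^1(\Gamma)$ and $\gamma\in H^{5/2}(\Gamma)\hookrightarrow C^2(\Gamma)$, because $\Gamma$ is one-dimensional. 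These $L^\infty$ bounds on $c$, $\kappa$, $\gamma$ and their tangential derivatives are used repeatedly below.

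\textbf{Step 1: the identity.} On $\Gamma$ I decompose $\Delta c=\partial_\nu\partial_\nu c+H\partial_\nu c+\Delta_\Gamma c$, where $H$ is the (smooth) curvature of $\Gamma$. I also use $u\cdot\na c=u\cdot\na_\Gamma c$, which holds because $u\cdot\nu=0$. Using $\partial_\nu c=\kappa(\gamma-c)$ I write
\[
\tfrac12\partial_\nu(\partial_\nu c)^2=\partial_\nu c\,\partial_\nu\partial_\nu c=\kappa\gamma\,\partial_\nu\partial_\nu c-\kappa c\,\partial_\nu\partial_\nu c .
\]
In both terms on the right I replace $\partial_\nu\partial_\nu c$ by $c_t+u\cdot\na_\Gamma c+nc-H\kappa(\gamma-c)-\Delta_\Gamma c$. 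The two time-derivative terms become $\int_\Gamma\kappa\gamma c_t=\frac{d}{dt}\int_\Gamma\kappa\gamma c$ and $-\int_\Gamma\kappa c c_t=-\frac12\frac{d}{dt}\int_\Gamma\kappa c^2$. For the $\Delta_\Gamma c$ terms I integrate by parts along the closed curve $\Gamma$, so no endpoint terms appear. This gives $\int_\Gamma\kappa c\Delta_\Gamma c=-\int_\Gamma\kappa|\na_\Gamma c|^2-\int_\Gamma c\,\na_\Gamma\kappa\cdot\na_\Gamma c$ and $-\int_\Gamma\kappa\gamma\Delta_\Gamma c=\int_\Gamma\na_\Gamma(\kappa\gamma)\cdot\na_\Gamma c$. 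Collecting terms, the left side of \eqref{a11} equals $\frac{d}{dt}\int_\Gamma\kappa\gamma c$ plus the remainder
\[
\int_\Gamma(\kappa\gamma-\kappa c)\big(u\cdot\na_\Gamma c+nc-H\kappa(\gamma-c)\big)\,dS+\int_\Gamma\big(\na_\Gamma(\kappa\gamma)-c\na_\Gamma\kappa\big)\cdot\na_\Gamma c\,dS .
\]
Within the remainder, the term $-\int_\Gamma\kappa nc^2\leq0$ can simply be dropped.

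\textbf{Step 2: bounding the remainder.} I use the trace inequality $\|f\|_{L^2(\Gamma)}^2\leq C\|f\|_{L^2_x}\|f\|_{H^1_x}$ together with the $L^\infty$ bounds above.

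1. The curvature terms are bounded by $C_0$.
2. The $n$-term satisfies $\int_\Gamma\kappa\gamma nc\leq C_0\|n\|_{L^1(\Gamma)}\leq C_0\|n\|_{L^2_x}^{1/2}\|n\|_{H^1_x}^{1/2}$. Young's inequality bounds this by $\frac16\|\na n\|_{L^2_x}^2+C_0(\|n\|_{L^2_x}^2+1)$.
3. The gradient terms are bounded by $C_0\|\na c\|_{L^1(\Gamma)}\leq C_0\|\na c\|_{L^2_x}^{1/2}\|\na c\|_{H^1_x}^{1/2}$, which in turn is at most $\frac18\|\na^2c\|_{L^2_x}^2+C_0(\|\na c\|_{L^2_x}^2+1)$.
4. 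The convective term is bounded by $C_0\|u\|_{L^2(\Gamma)}\|\na c\|_{L^2(\Gamma)}$. Using $\|u\|_{L^2(\Gamma)}\leq C\|u\|_{H^1_x}$ and the trace inequality for $\na c$, this is at most $C_0\|u\|_{H^1_x}\|\na c\|_{L^2_x}^{1/2}\|\na c\|_{H^1_x}^{1/2}$. Young's inequality then bounds it by $\frac18\|\na^2 c\|_{L^2_x}^2+C_0(\|u\|_{H^1_x}^2+\|\na c\|_{L^2_x}^2)$, since the exponents $\frac12+\frac14+\frac14=1$ add up correctly.

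Adding these bounds yields \eqref{a11}.

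The main obstacle is justifying Step 1 rigorously. First, evaluating the equation on $\Gamma$ and taking the normal derivative of $(\partial_\nu c)^2$ requires traces of $\na^2 c$ and $c_t$. Proposition \ref{p1} gives $c\in L^2_tH^3_x$, so these traces make sense for a.e.\ $t$, and the time derivative of $\int_\Gamma\kappa c^2$ is understood in the distributional sense in $t$; if necessary I would argue with smooth approximations. Second, the decomposition of $\Delta c$ near $\Gamma$ has to be done carefully, in particular getting the sign convention of the curvature term consistent with the outward normal. Neither issue affects the structure of the estimates, since every curvature term is bounded by $C_0$.
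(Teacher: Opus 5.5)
Your proposal is essentially the paper's proof. You restrict the $c$-equation to $\Gamma$, split $\Delta c$ into the second normal derivative, the curvature term and $\Delta_\Gamma c$, multiply by $\partial_\nu c=\kappa(\gamma-c)$, integrate the $\Delta_\Gamma c$ term by parts along $\Gamma$, and close with trace inequalities. Your variations are harmless: you discard $-\int_\Gamma\kappa nc^2\le 0$ instead of estimating it, you use the interpolated trace inequality, and you implicitly extend $\nu$ so that $(\nu\cdot\na)\nu=0$, which removes the paper's extra term $\nu(\na\nu)\na c$.

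One slip needs fixing. On the one-dimensional curve $\Gamma$ the embeddings $H^{3/2}(\Gamma)\hookrightarrow C^1(\Gamma)$ and $H^{5/2}(\Gamma)\hookrightarrow C^2(\Gamma)$ are borderline and false. So $\na_\Gamma\kappa$ need not be bounded, and the gradient terms in your item 3 cannot be reduced to $C_0\|\na c\|_{L^1(\Gamma)}$. Instead, apply Cauchy--Schwarz with $\|\na_\Gamma\kappa\|_{L^2(\Gamma)}\le C\|\kappa\|_{H^1(\Gamma)}$ and $\|\na c\|_{L^2(\Gamma)}$, as the paper does; this gives the same final bound.
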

\begin{proof}
Firstly, the maximum principle and the Sobolev embedding inequality give
\be\label{a36}
\|c\|_{L^\infty_{T^*}L^\infty_x}\leq \max\{\|\gamma\|_{L^\infty(\Gamma)}, \|c_0\|_{L^\infty_x}\}\leq \max\{C_0\|\gamma\|_{H^1(\Gamma)}, \|c_0\|_{L^\infty_x}\}\leq C_0,
\ee
where the generic constant $C_0$ depends only on $\Omega$ and the initial and boundary data.
We next recall the relation between the Laplace operator and Laplace-Beltrami operator on $\Gamma$ (see e.g. \cite[Lemma 1]{xu-zhao2003}):
\ben
\begin{split} 
\Delta_{\Gamma}c=\Delta c-(\na\cdot \nu) \partial_{\nu}c-\partial_{\nu}(\partial_{\nu} c)+\nu (\na \nu)\na c,
\end{split}
\enn
which, along with the second equation in \eqref{cns} entails that
\ben
c_t=\Delta_{\Gamma} c+(\na\cdot \nu) \partial_{\nu}c+\partial_{\nu}(\partial_{\nu} c)-\nu (\na \nu)\na c-u\cdot\na c-nc\quad \text{on}\,\,\,\,\Gamma.
\enn
Multiplying the above equality with $\kappa(x)(\gamma(x)-c)$ in $L^2(\Gamma)$ and using the boundary condition $\partial_\nu c=\kappa(x)(\gamma(x)-c)$ on $\Gamma$, we obtain
\be\label{a7}
\begin{split}
\frac{1}{2}&\int_{\Gamma}\partial_\nu(\partial_\nu c)^2\,dS+\frac{1}{2}\frac{d}{dt}\int_{\Gamma}\kappa c^2\,dS\\
=&-\int_{\Gamma}\Delta_{\Gamma}c\, \kappa(\gamma-c)\,dS-\int_{\Gamma}(\na\cdot\nu)\kappa^2(\gamma-c)^2\,dS
+\int_{\Gamma}\nu(\na \nu)\na c \kappa(\gamma-c)\,dS\\
&+\int_{\Gamma}(u\cdot\na c)\kappa(\gamma-c)\,dS+\int_{\Gamma}nc\kappa(\gamma-c)\,dS+\frac{d}{dt}\int_{\Gamma}\kappa \gamma c\,dS.
\end{split}
\ee
By the trace theorem and \eqref{a36}, one gets
\ben
\begin{split}
-\int_{\Gamma}\Delta_{\Gamma}c \kappa(\gamma-c)\,dS
=&\int_{\Gamma}\na_{\Gamma}c \na_{\Gamma}(\kappa\gamma)\,dS
-\int_{\Gamma}\na_{\Gamma}c \na_{\Gamma}(\kappa c)\,dS\\
=&\int_{\Gamma}\na_{\Gamma}c \na_{\Gamma}(\kappa\gamma)\,dS
-\int_{\Gamma}\na_{\Gamma}c \na_{\Gamma}\kappa c\,dS
-\int_{\Gamma}\kappa|\na_{\Gamma}c|^2\,dS\\
\leq &\|\na_{\Gamma}c\|_{L^2(\Gamma)}(\|\na_{\Gamma}(\gamma\kappa)\|_{L^2(\Gamma)}
+\|\na_{\Gamma}\kappa\|_{L^2(\Gamma)}\|c\|_{L^\infty(\Gamma)})-\int_{\Gamma}\kappa|\na_{\Gamma}c|^2\,dS\\
\leq& C_0(\|\na c\|_{L^2_x}+\|\na^2 c\|_{L^2_x})\|\kappa\|_{H^1(\Gamma)}(\|\gamma\|_{H^1(\Gamma)}+\|c\|_{L^\infty_x})-\int_{\Gamma}\kappa|\na_{\Gamma}c|^2\,dS\\
\leq &\frac{1}{12}\|\na^2 c\|_{L^2_x}^2+C_0(\|\na c\|_{L^2_x}^2+1)-\int_{\Gamma}\kappa|\na_{\Gamma}c|^2\,dS.
\end{split}
\enn
The trace theorem and \eqref{a36} further entails that
\ben
\begin{split}
\int_{\Gamma}\nu(\na \nu)\na c \kappa(\gamma-c)\,dS
\leq& \|\nu(\na \nu)\|_{L^\infty(\Gamma)}\|\na c\|_{L^2(\Gamma)}(\|\kappa\gamma\|_{L^2(\Gamma)}+\|\kappa c\|_{L^2(\Gamma)})\\
\leq &C_0(\|\na c\|_{L^2_x}+\|\na^2 c\|_{L^2_x})\|\kappa\|_{H^1(\Gamma)}(\|\gamma\|_{H^1(\Gamma)}+\|c\|_{L^\infty_x})\\
\leq &\frac{1}{12}\|\na^2 c\|_{L^2_x}^2+C_0(\|\na c\|_{L^2_x}^2+1).
\end{split}
\enn
Similarly, 
\ben
\begin{split}
&-\int_{\Gamma}(\na\cdot\nu)\kappa^2(\gamma-c)^2\,dS
+\int_{\Gamma}(u\cdot\na c)\kappa(\gamma-c)\,dS+\int_{\Gamma}nc\kappa(\gamma-c)\,dS\\
\leq&C_0\|\na\cdot\nu\|_{L^\infty(\Gamma)}\|\kappa\|_{L^2(\Gamma)}^2(\|\gamma\|_{L^\infty(\Gamma)}^2+\|c\|_{L^\infty(\Gamma)}^2)\\
&+C_0\|\kappa\|_{L^\infty(\Gamma)}(\|u\|_{L^2(\Gamma)}\|\na c\|_{L^2(\Gamma)}+\|n\|_{L^2(\Gamma)}\| c\|_{L^2(\Gamma)})(\|\gamma\|_{L^\infty(\Gamma)}+\|c\|_{L^\infty(\Gamma)})\\
\leq &C_0\|u\|_{H^1_x}(\|\na c\|_{L^2_x}+\|\na^2 c\|_{L^2_x})+C_0(\|n\|_{H^1_x}+1)\\
\leq &\frac{1}{12}\|\na^2 c\|_{L^2_x}^2+\frac{1}{6}\|\na n\|_{L^2_x}^2+C_0(\|u\|_{H^1_x}^2+\|\na c\|_{L^2_x}^2+\|n\|_{L^2_x}^2+1).
\end{split}
\enn
Substituting the above estimates into \eqref{a7}, one derives the desired estimates. The proof is finished.

\end{proof}
\begin{lemma}\label{l2}
Suppose that the assumptions in Theorem \ref{t1} hold true. Then there exists a constant $C_0$ such that
\ben
\begin{split}
&\frac{d}{dt}(\|u\|_{L^2_x}^2+\|\omega\|_{L^2_x}^2+\|\na c\|_{L^2_x}^2)+\frac{1}{2}\frac{d}{dt}\int_{\Gamma}\kappa c^2\,dS+\frac{5}{4}\|\na^2 c\|_{L^2_x}^2\\
\leq & \frac{1}{2}\|\na n\|_{L^2_x}^2+
C_0(\|n\|_{L^2_x}^2+\|u\|_{L^2_x}^2+\|\na c\|_{L^2_x}^2+\|\omega\|_{L^2_x}^2+1)
+\frac{d}{dt}\int_{\Gamma}\kappa \gamma c\,dS
\end{split}
\enn
for all $t\in (0,T^*)$, where $\omega=\na\times u$.
\end{lemma}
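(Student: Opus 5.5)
Three energy estimates are added, with Lemma \ref{l0} absorbing the boundary terms. The three pieces are: (i) the $L^2$ estimate of $u$, (ii) the $L^2$ estimate of the vorticity $\omega=\na\times u$, and (iii) the estimate obtained by testing the $c$-equation against $-\Delta c$.

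\emph{Steps (i) and (ii).} For $u$, we test the Euler equation with $u$. Since $\na\cdot u=0$ and $u\cdot\nu=0$ on $\Gamma$, the convection and pressure terms vanish. This gives
\[
\frac{1}{2}\frac{d}{dt}\|u\|_{L^2_x}^2\leq \|\na\phi\|_{L^\infty_x}\|n\|_{L^2_x}\|u\|_{L^2_x}.
\]
For the vorticity, taking the curl in two dimensions gives
\[
\omega_t+u\cdot\na\omega=-\na^\perp n\cdot\na\phi - n\,\na^\perp\cdot\na\phi ,
\]
and the last term is zero. Testing with $\omega$, the transport term again vanishes by $u\cdot\nu=0$. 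The forcing is bounded by $\|\na n\|_{L^2_x}\|\omega\|_{L^2_x}$, hence by $\frac{1}{8}\|\na n\|_{L^2_x}^2+C_0\|\omega\|_{L^2_x}^2$. If preferred, the derivative can instead be moved onto $\omega$ through $\int_\Omega \na^\perp n\cdot\na\phi\,\omega$. The $\|\na n\|^2_{L^2_x}$ form is acceptable here because it appears on the right-hand side of the claim.

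\emph{Step (iii).} Multiply the $c$-equation by $-\Delta c$ and integrate. The time derivative gives $\frac12\frac{d}{dt}\|\na c\|^2_{L^2_x}$ minus the boundary term $\int_\Gamma c_t\partial_\nu c\,dS$. Using $\partial_\nu c=\kappa(\gamma-c)$, this boundary term equals
\[
\frac{d}{dt}\int_\Gamma \kappa\gamma c\,dS-\frac12\frac{d}{dt}\int_\Gamma\kappa c^2\,dS,
\]
which produces exactly the boundary time derivatives appearing in the statement.

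Next I would rewrite $\|\Delta c\|^2_{L^2_x}$ as $\|\na^2 c\|^2_{L^2_x}$ plus boundary terms, via the standard identity
\[
\int_\Omega|\Delta c|^2=\int_\Omega|\na^2c|^2+\int_\Gamma\bigl(\partial_\nu c\,\Delta c-\tfrac12\partial_\nu|\na c|^2\bigr)\,dS .
\]
On $\Gamma$, I split $|\na c|^2=(\partial_\nu c)^2+|\na_\Gamma c|^2$. The resulting boundary terms are of three kinds: $\partial_\nu(\partial_\nu c)^2$, terms involving $\Delta_\Gamma c\,\partial_\nu c$ (which integrate by parts to $\kappa|\na_\Gamma c|^2$ plus lower order), and curvature terms. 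Rather than redoing this computation, I would group these terms precisely into the left-hand side of Lemma \ref{l0} and then invoke its bound. That bound contributes $\frac14\|\na^2c\|^2_{L^2_x}+\frac16\|\na n\|^2_{L^2_x}$, lower-order terms, and the $\frac{d}{dt}\int_\Gamma\kappa\gamma c$ term.

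The remaining interior terms are handled as follows.
\begin{itemize}
\item The reaction term $\int_\Omega nc\,\Delta c$ is bounded by $\|c\|_{L^\infty_x}\|n\|_{L^2_x}\|\na^2 c\|_{L^2_x}$, using \eqref{a36} and the elliptic bound $\|\Delta c\|\lesssim\|\na^2c\|$.
\item The convection term $\int_\Omega (u\cdot\na c)\Delta c$ is integrated by parts: $\int_\Omega \na u:\na c\otimes\na c$ plus a boundary term. It is then bounded by $\|\na u\|_{L^2_x}\|\na c\|_{L^4_x}^2$. Gagliardo--Nirenberg gives $\|\na c\|_{L^4_x}^2\lesssim \|\na c\|_{L^2_x}(\|\na^2 c\|_{L^2_x}+\|\na c\|_{L^2_x})$, and $\|c\|_{L^\infty_x}$ is bounded.
\end{itemize}
Here I must be careful: the target is quadratic in $\|\omega\|_{L^2_x}$ and $\|\na c\|_{L^2_x}$, not cubic. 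So I would instead use $\|\na c\|_{L^4_x}^2\lesssim\|c\|_{L^\infty_x}\|\na^2 c\|_{L^2_x}+\dots$, which holds in the $L^\infty$–$H^2$ interpolation form, together with $\|\na u\|_{L^2_x}\lesssim\|u\|_{L^2_x}+\|\omega\|_{L^2_x}$ (valid since $u\cdot\nu=0$). Young's inequality then yields $\varepsilon\|\na^2 c\|^2_{L^2_x}+C_0(\|u\|^2_{L^2_x}+\|\omega\|^2_{L^2_x})$.

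\emph{Closing.} Doubling the $c$-estimate so that its leading term is $2\|\na^2c\|^2_{L^2_x}$, absorbing the $\frac14$ from Lemma \ref{l0} and small multiples from Young's inequality leaves at least $\frac54\|\na^2c\|^2_{L^2_x}$ on the left, as claimed. The collected $\|\na n\|^2_{L^2_x}$ contributions total at most $\frac12$.

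The main obstacle is the boundary bookkeeping in step (iii). I must verify that the boundary terms from $\|\Delta c\|^2$ and from the convection integration by parts are exactly of the form controlled in Lemma \ref{l0}, with the constants matching. The boundary convection term is $\int_\Gamma (u\cdot\na_\Gamma c)\partial_\nu c\,dS$, since $u\cdot\nu=0$. I would bound it by trace, $\|u\|_{H^1_x}(\|\na c\|_{L^2_x}+\|\na^2c\|_{L^2_x})$, as is done in Lemma \ref{l0}.
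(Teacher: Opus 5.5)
Steps (i) and (ii) are sound, and so is your treatment of the interior terms in (iii). Testing the $c$-equation with $-\Delta c$ is a genuinely different route from the paper's, which differentiates the $c$-equation and tests with $\nabla c$. The gap is the step you deferred: the boundary terms of $\|\Delta c\|_{L^2_x}^2-\|\nabla^2 c\|_{L^2_x}^2$ cannot be ``grouped into the left-hand side of Lemma~\ref{l0}''. Let $\tau$ be the unit tangent and write $c_{\tau\tau},c_{\tau\nu},c_{\nu\nu}$ for the Hessian entries in the frame $(\tau,\nu)$. Then
\[
\partial_\nu c\,\Delta c-\tfrac12\partial_\nu|\nabla c|^2=\partial_\nu c\,c_{\tau\tau}-\partial_\tau c\,c_{\tau\nu},
\]
because the $\partial_\nu c\,c_{\nu\nu}$ pieces cancel. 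So there is no $\int_\Gamma\partial_\nu(\partial_\nu c)^2\,dS$ for Lemma~\ref{l0} to absorb. Moreover, Lemma~\ref{l0} is just the boundary trace of the $c$-equation tested against $\partial_\nu c=\kappa(\gamma-c)$. It re-expresses the same quantity $\int_\Gamma c_t\,\partial_\nu c\,dS$ that you already extracted exactly from the time-derivative term. Invoking it would double-count the $\frac{d}{dt}\int_\Gamma$ terms. It would also leave an unsigned $\frac12\int_\Gamma\partial_\nu(\partial_\nu c)^2\,dS$ that nothing controls, since an $H^2$ bound gives no $L^2(\Gamma)$ trace of $\nabla^2 c$. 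In the paper's route this term genuinely survives, inside $I_3=\frac12\int_\Gamma\partial_\nu|\nabla c|^2\,dS$ with a bad sign, and that is the only reason Lemma~\ref{l0} is needed there.

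The fix is to finish the computation directly. Set $\partial_\tau\nu=K\tau$, $a=\partial_\nu c$ and $b=\partial_\tau c$. Then $c_{\tau\tau}=\partial_\tau b+Ka$ and $c_{\tau\nu}=\partial_\tau a-Kb$, so the boundary integrand equals $a\,\partial_\tau b-b\,\partial_\tau a+K(a^2+b^2)$. Integrating over the closed components of $\Gamma$ and inserting $a=\kappa(\gamma-c)$ gives
\[
\|\Delta c\|_{L^2_x}^2=\|\nabla^2c\|_{L^2_x}^2+2\int_\Gamma\kappa(\partial_\tau c)^2\,dS-2\int_\Gamma\partial_\tau c\,\bigl[\partial_\tau(\kappa\gamma)-c\,\partial_\tau\kappa\bigr]\,dS+\int_\Gamma K|\nabla c|^2\,dS .
\]
The second term on the right is nonnegative. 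By the trace inequality and \eqref{a36}, the last two terms are bounded in absolute value by $\frac18\|\nabla^2c\|_{L^2_x}^2+C_0(\|\nabla c\|_{L^2_x}^2+1)$. With this, Lemma~\ref{l0} is not needed at all. The $\frac{d}{dt}\int_\Gamma$ terms come only from $-\int_\Gamma c_t\,\partial_\nu c\,dS$, and the only $\|\nabla n\|_{L^2_x}^2$ is the one from the vorticity, so your route is shorter than the paper's.

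After doubling, the two boundary time-derivative terms carry coefficients $1$ and $2$ instead of $\frac12$ and $1$. This is harmless, because they are bounded by \eqref{a36} after time integration in Lemma~\ref{l8}. It is also what the paper's own derivation yields if its final doubling is applied to the right-hand side too. So do not try to make these constants match the statement exactly.
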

\begin{proof}
Testing the third equation of \eqref{cns} with $u$ in $L^2(\Omega)$ and using the fact $\na\cdot u=0$ in $\Omega$, one gets
\be\label{a8}
\frac{d}{dt}\|u\|_{L^2_x}^2\leq C_0(\|n\|_{L^2_x}^2+\|u\|_{L^2_x}^2),
\ee
where the constant $C_0$ depends only on $\phi$.
Applying $\na\times$ to the third equation of \eqref{cns}, one deduces the equation for vorticity $\omega=\na\times u$: 
\be\label{a26}
\omega_t+u\cdot \na\omega=(\na\times n) \cdot(\na^{\perp} \phi),
\ee
which gives
\be\label{a37}
\frac{d}{dt}\|\omega\|_{L^2_x}^2\leq \|\na n\|_{L^2_x}\|\na \phi\|_{L^\infty_x}\|\omega\|_{L^2_x}\leq \frac{1}{6}\|\na n\|_{L^2_x}^2+C_0\|\omega\|_{L^2_x}^2.
\ee
From the second equation of \eqref{cns}, we know that the equation for $\na c$ is as follows:
\ben
\na c_t+\na(u\cdot \na c)=\Delta\na c-\na nc-n\na c,
\enn
which, multiplied with $\na c$ in $L^2(\Omega)$ along with integration by parts yields
\be\label{a10}
\begin{split}
\frac{1}{2}\frac{d}{dt}\|\na c\|_{L^2_x}^2+\|\na^2 c\|_{L^2_x}^2+\|\sqrt{n}\na c\|_{L^2_x}^2
=&-\int_{\Omega}\na(u\cdot \na c)\cdot\na c\,dx-\int_{\Omega}c\na n\cdot\na c\,dx\\
&+\int_{\Gamma}[(\partial_1\na c\cdot \nu)\partial_1 c+(\partial_2\na c\cdot \nu)\partial_2 c]\,dS\\
=:&\,I_1+I_2+I_3.
\end{split}
\ee
From the facts $\na \cdot u=0$ in $\Omega$, $u\cdot\nu=0$ on $\Gamma$, Gagliardo-Nirenberg inequality, Cauchy-Schwarz inequality and \eqref{a36}, we deduce that
\ben
\begin{split}
I_1
=&-\frac{1}{2}\int_{\Omega}u\cdot \na(|\na c|^2)\,dx-\int_{\Omega}\na c \na u\na c\,dx\\
\leq &\|\na u\|_{L^2_x}\|\na c\|_{L^4_x}^2\\
\leq &\|\na u\|_{L^2_x}\|c\|_{L^\infty_x}\|\na^2 c\|_{L^2_x}\\
\leq &\frac{1}{16}\|\na^2 c\|_{L^2_x}^2+C_0\|\na u\|_{L^2_x}^2.
\end{split}
\enn
The Cauchy-Schwarz inequality and \eqref{a36} lead to
\ben
I_2\leq\|c\|_{L^\infty_x}\|\na n\|_{L^2_x}\|\na c\|_{L^2_x}\leq  \frac{1}{6}\|\na n\|_{L^2_x}^2+C_0\|\na c\|_{L^2_x}^2.
\enn
Let $\tau$ be the unit tangent vector of $\Gamma$ at $x$. Then the boundary condition for $c$ in \eqref{bc}, the fact $\kappa\geq 0$, \eqref{a36} and the trace theorem imply that
\ben
\begin{split}
\int_{\Gamma}[\na (\na c\cdot\nu)]\cdot \na c\,dS
=&\int_{\Gamma}[\na (\na c\cdot\nu)]\cdot [(\na c\cdot\tau)\tau+(\na c\cdot\nu)\nu]\,dS\\
=&\int_{\Gamma}(\na c\cdot\tau)\partial_{\tau}[k(\gamma-c)]\,dS+\int_{\Gamma}(\na c\cdot\nu)\nu\cdot\na (\na c\cdot\nu)\,dS\\
=&\int_{\Gamma}\partial_{\tau}c\partial_{\tau}(\kappa\gamma)\,dS-\int_{\Gamma}\kappa(\partial_{\tau}c)^2\,dS
-\int_{\Gamma}c\partial_{\tau}c\partial_{\tau}\kappa\,dS+\frac{1}{2}\int_{\Gamma}\partial_\nu(\partial_\nu c)^2\,dS\\
\leq &\|\na c\|_{L^2(\Gamma)}(\|\kappa\gamma\|_{H^1(\Gamma)}+\|c\|_{L^\infty_x}\|\kappa\|_{H^1(\Gamma)})+\frac{1}{2}\int_{\Gamma}\partial_\nu(\partial_\nu c)^2\,dS\\
\leq &C_0(\|\na c\|_{L^2_x}+\|\na^2 c\|_{L^2_x})(\|\kappa\gamma\|_{H^1(\Gamma)}+\|c\|_{L^\infty_x}\|\kappa\|_{H^1(\Gamma)})+\frac{1}{2}\int_{\Gamma}\partial_\nu(\partial_\nu c)^2\,dS\\
\leq&\frac{1}{32}\|\na^2c\|_{L^2_x}^2+C_0(\|\na c\|_{L^2_x}^2+1)+\frac{1}{2}\int_{\Gamma}\partial_\nu(\partial_\nu c)^2\,dS.
\end{split}
\enn
Substituting the above estimate into $I_3$ and using the trace theorem, one gets
\ben
\begin{split}
I_3=&\int_{\Gamma}[\na (\na c\cdot\nu)]\cdot \na c\,dS-\int_{\Gamma}[(\na c\cdot \partial_1\nu)\partial_1 c+(\na c\cdot \partial_2\nu)\partial_2 c]\,dS\\
\leq &\int_{\Gamma}[\na (\na c\cdot\nu)]\cdot \na c\,dS+C_0\|\na c\|_{L^2(\Gamma)}^2\|\na\nu\|_{L^\infty(\Gamma)}\\
\leq &\int_{\Gamma}[\na (\na c\cdot\nu)]\cdot \na c\,dS+C_0\|\na c\|_{L^2_x}\|\na^2 c\|_{L^2_x}\\
\leq &\frac{1}{16}\|\na^2c\|_{L^2_x}^2+C_0(\|\na c\|_{L^2_x}^2+1)+\frac{1}{2}\int_{\Gamma}\partial_\nu(\partial_\nu c)^2\,dS.
\end{split}
\enn
Substituting the above estimates for $I_1$, $I_2$ and $I_3$ into \eqref{a10} and using \eqref{a11}, we obtain
\ben
\begin{split}
\frac{d}{dt}\|\na c\|_{L^2_x}^2+\frac{5}{4}\|\na^2 c\|_{L^2_x}^2+\|\sqrt{n}\na c\|_{L^2_x}^2
\leq & \frac{1}{3}\|\na n\|_{L^2_x}^2+
C_0(\|n\|_{L^2_x}^2+\|\na c\|_{L^2_x}^2+\|\na u\|_{L^2_x}^2+1)\\
&+\frac{d}{dt}\int_{\Gamma}\kappa \gamma c\,dS
-\frac{1}{2}\frac{d}{dt}\int_{\Gamma}\kappa c^2\,dS,
\end{split}
\enn
which, in conjunction \eqref{a8}, \eqref{a37} and the Calderón-Zygmund inequality $\|\na u\|_{L^2_x}\leq C_0\|\omega\|_{L^2_x}$  (see e.g. \cite[Proposition 7.5]{Bahouri-Chemin-Dachin2011}), completes the proof.

\end{proof}
\begin{lemma}\label{l8}
Let the assumptions in Theorem \ref{t1} hold true. Assume further that $T^*<\infty$ and that
\be\label{a45}
\|n\|_{L^q(0,T^*;L^p(\Omega))}<\infty,\qquad \frac{2}{p}+\frac{2}{q}\leq 1
\ee
with some $2<p\leq \infty$. Then there exists a constant $C_{T^*}$ depending on $T^*$, such that
\be\label{a38}
\|n\|_{L^\infty_{T^*}L^2_x}+\|\na c\|_{L^\infty_{T^*}L^2_x}+\|u\|_{L^\infty_{T^*}H^1_x}
+\|\na n\|_{L^2_{T^*}L^2_x}+\|\na^2 c\|_{L^2_{T^*}L^2_x}\leq C_{T^*}.
\ee
\end{lemma}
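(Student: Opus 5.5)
Our plan is to add an $L^2$ energy estimate for $n$ to the inequality of Lemma \ref{l2}, which already controls $u$, $\omega$ and $\na c$. The only term not yet absorbed there is $\frac12\|\na n\|_{L^2_x}^2$ on the right-hand side. We will close the resulting inequality with Gronwall's lemma, using the hypothesis \eqref{a45} to make the coefficient integrable in time.

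\textbf{Step 1: the $L^2$ estimate for $n$.} We test the first equation of \eqref{cns} with $n$. Because $\na\cdot u=0$ and $u\cdot\nu=0$, the convection term vanishes. The no-flux condition $(\na n-n\na c)\cdot\nu=0$ kills the boundary term, which gives
\be\label{planA}
\frac12\frac{d}{dt}\|n\|_{L^2_x}^2+\|\na n\|_{L^2_x}^2=\int_\Omega n\na c\cdot\na n\,dx\leq \frac14\|\na n\|_{L^2_x}^2+\|n\na c\|_{L^2_x}^2 .
\ee
We then estimate $\|n\na c\|_{L^2_x}\le\|n\|_{L^p_x}\|\na c\|_{L^{2p/(p-2)}_x}$ for $2<p<\infty$. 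The Gagliardo--Nirenberg inequality in two dimensions gives $\|\na c\|_{L^{2p/(p-2)}_x}\le C_0\|\na c\|_{L^2_x}^{1-2/p}\|\na c\|_{H^1_x}^{2/p}$. We use the standard elliptic-type bound $\|\na c\|_{H^1_x}\le C_0(\|\na^2c\|_{L^2_x}+\|\na c\|_{L^2_x})$, or simply the full $H^1$ norm. Young's inequality with exponents $p$ and $p/(p-2)$ then yields
\[
\|n\na c\|_{L^2_x}^2\le \epsilon\|\na^2 c\|_{L^2_x}^2+C_\epsilon\big(\|n\|_{L^p_x}^{\frac{2p}{p-2}}+1\big)\|\na c\|_{L^2_x}^2 .
\]
In the case $p=\infty$ we instead bound $\|n\na c\|_{L^2_x}^2$ directly by $\|n\|_{L^\infty_x}^2\|\na c\|_{L^2_x}^2$. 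The exponent satisfies $\frac{2p}{p-2}\le q$ exactly when $\frac2p+\frac2q\le1$. Since $T^*<\infty$, \eqref{a45} therefore implies that $\|n\|_{L^p_x}^{2p/(p-2)}\in L^1(0,T^*)$.

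\textbf{Step 2: combining and closing.} We add \eqref{planA}, multiplied by a suitable constant, to the inequality of Lemma \ref{l2}. The constant is chosen so that $\frac12\|\na n\|_{L^2_x}^2$ from Lemma \ref{l2} is absorbed by the dissipation $\frac34\|\na n\|_{L^2_x}^2$ in \eqref{planA}. We take $\epsilon$ small so that the $\|\na^2c\|$ term is absorbed by $\frac54\|\na^2c\|_{L^2_x}^2$. Define
\[
Y(t)=\|n\|_{L^2_x}^2+\|u\|_{L^2_x}^2+\|\omega\|_{L^2_x}^2+\|\na c\|_{L^2_x}^2+\frac12\int_\Gamma\kappa c^2\,dS-\int_\Gamma\kappa\gamma c\,dS .
\]
This leads to
\[
Y'+c_1(\|\na n\|_{L^2_x}^2+\|\na^2c\|_{L^2_x}^2)\le C_0\big(1+\|n\|_{L^p_x}^{\frac{2p}{p-2}}\big)(Y+C_0).
\]
The boundary terms are bounded by \eqref{a36} and $\kappa,\gamma\in L^\infty(\Gamma)$, so $Y$ differs from the sum of squared norms by at most a constant $C_0$. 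Gronwall's lemma then bounds $Y$ on $(0,T^*)$ by $C_{T^*}$. Integrating in time bounds $\int_0^{T^*}(\|\na n\|_{L^2_x}^2+\|\na^2c\|_{L^2_x}^2)\,dt$. Finally, $\|u\|_{H^1_x}\le C_0(\|u\|_{L^2_x}+\|\omega\|_{L^2_x})$ by the Calderón--Zygmund inequality, and this gives \eqref{a38}.

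\textbf{Main obstacle.} The delicate step is the chemotactic term in Step 1. We must estimate it using only $\|\na c\|_{L^2_x}$ and $\|\na^2c\|_{L^2_x}$, because these are the quantities Lemma \ref{l2} controls, and we must track exponents so that the time integrability matches \eqref{a45} exactly. The borderline case $\frac2p+\frac2q=1$ is where the Gagliardo--Nirenberg interpolation must be used sharply. We also need care with the inhomogeneous boundary contributions so that the energy $Y$ remains bounded below.
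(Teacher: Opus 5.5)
Your proposal is correct and follows the paper's proof essentially step for step. In both, you test the $n$-equation with $n$, control $\int_\Omega n\na c\cdot\na n\,dx$ by H\"older, Gagliardo--Nirenberg and Young, add the result to Lemma \ref{l2}, and close with Gronwall's inequality, \eqref{a36} for the boundary terms, and the Calder\'on--Zygmund bound. The only deviation is in the case $2<p<\infty$: the paper interpolates $\|\na c\|_{L^{2p/(p-2)}_x}$ between $\|c\|_{L^\infty_x}$ (bounded by \eqref{a36}) and $\|\na^2c\|_{L^2_x}$, so $C_p\|n\|_{L^p_x}^{2p/(p-2)}$ enters as an additive source term. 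Your interpolation between $\|\na c\|_{L^2_x}$ and $\|\na c\|_{H^1_x}$ instead makes it a Gronwall coefficient, exactly as in the paper's $p=\infty$ case, so it treats both cases uniformly. One small slip: the Young exponents there should be $p/2$ and $p/(p-2)$, not $p$ and $p/(p-2)$.
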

\begin{proof}
We split the proof into the case $2<p<\infty$ and the case $p=\infty$.
~\\
$\bullet$ \textbf{The case} {\boldmath $2<p<\infty$}. Taking the $L^2_x$ inner product of the first equation in \eqref{cns} and using integration by parts to have
\be\label{a39}
\begin{split}
\frac{1}{2}\frac{d}{dt}\|n\|_{L^2_x}^2+\|\na n\|_{L^2_x}^2
=&\int_{\Omega} n \na c\cdot \na n\,dx\\
\leq& \|n\|_{L^p_x}\|\na n\|_{L^2_x}\|\na c\|_{L^{2p/(p-2)}_x}\\
\leq &C_0\|n\|_{L^p_x}\|\na n\|_{L^2_x}\|c\|_{L^{\infty}_x}^{(1-2/p)}\|\na^2 c\|_{L^2_x}^{2/p}\\
\leq &\frac{1}{4}\|\na n\|_{L^2_x}^2+\frac{1}{8}\|\na^2 c\|_{L^2_x}^2+C_p\|n\|_{L^p_x}^{2p/(p-2)},
\end{split}
\ee
where in the second inequality we used the Gagliardo-Nirenberg inequality and in the last inequality we used \eqref{a36}, and the constant $C_p$ depends on $p$, $\Omega$ and the initial and boundary data. Combining the above estimates with Lemma \ref{l2}, one gets
\be\label{a46}
\begin{split}
&\frac{d}{dt}(\|u\|_{L^2_x}^2+\|\omega\|_{L^2_x}^2+\|\na c\|_{L^2_x}^2+\|n\|_{L^2_x}^2+\frac{1}{2}\int_{\Gamma}\kappa c^2\,dS)+\|\na^2 c\|_{L^2_x}^2+\|\na n\|_{L^2_x}^2\\
\leq & 
C_0(\|n\|_{L^2_x}^2+\|u\|_{L^2_x}^2+\|\na c\|_{L^2_x}^2+\|\omega\|_{L^2_x}^2+1)
+\frac{d}{dt}\int_{\Gamma}\kappa \gamma c\,dS+C_p\|n\|_{L^p_x}^{2p/(p-2)}.
\end{split}
\ee
Noting that 
\ben
\int_0^{T^*}\|n\|_{L^p_x}^{2p/(p-2)}\,dt\leq (T^*)^{1-2p/[q(p-2)]} \|n\|_{L^q_{T^*}L^p_x}^{2p/(p-2)}<\infty,
\enn
we apply Gronwall's inequality to \eqref{a46} and use \eqref{a36} and the Calderón-Zygmund inequality $\|\na u\|_{L^2_x}\leq C_0 \|\omega\|_{L^2_x}$ to derive \eqref{a38}.
~\\
$\bullet$ \textbf{The case} {\boldmath $p=\infty$}. Similarly to the derivation of \eqref{a39}, one gets
\ben
\begin{split}
\frac{1}{2}\frac{d}{dt}\|n\|_{L^2_x}^2+\|\na n\|_{L^2_x}^2
=&\int_{\Omega} n \na c\cdot \na n\,dx\\
\leq& \|n\|_{L^\infty_x}\|\na n\|_{L^2_x}\|\na c\|_{L^2_x}\\
\leq &\frac{1}{4}\|\na n\|_{L^2_x}^2+\|n\|_{L^\infty_x}^2\|\na c\|_{L^2_x}^2,
\end{split}
\enn
which, along with Lemma \ref{l2} gives
\ben
\begin{split}
&\frac{d}{dt}(\|u\|_{L^2_x}^2+\|\omega\|_{L^2_x}^2+\|\na c\|_{L^2_x}^2+\|n\|_{L^2_x}^2+\frac{1}{2}\int_{\Gamma}\kappa c^2\,dS)+\|\na^2 c\|_{L^2_x}^2+\|\na n\|_{L^2_x}^2\\
\leq & 
C_0(\|n\|_{L^2_x}^2+\|u\|_{L^2_x}^2+\|\na c\|_{L^2_x}^2+\|\omega\|_{L^2_x}^2+1)
+\frac{d}{dt}\int_{\Gamma}\kappa \gamma c\,dS+\|n\|_{L^\infty_x}^2\|\na c\|_{L^2_x}^2.
\end{split}
\enn
Then \eqref{a38} follows from Gronwall's inequality, the assumption that $\|n\|_{L^2_{T^*}L^\infty_x}<\infty$, \eqref{a36} and the Calderón-Zygmund inequality $\|\na u\|_{L^2_x}\leq C_0\|\omega\|_{L^2_x}$.

\end{proof}
\begin{lemma}\label{l3}
Let the assumptions in Lemma \ref{l8} hold true. Then there exists a constant $C_{r,\rho,T^*}$ depending on $2\leq r<\infty$, $1<\rho<\infty$, $T^*$, $\Omega$ and the initial and boundary data, such that
\ben
\|n\|_{L^\infty_{T^*}L^r_x}+\|\na c\|_{L^\infty_{T^*}L^r_x}+\|c\|_{L^\rho_{T^*}W^{2,r}_x}\leq C_{r,\rho,T^*}.
\enn 
\end{lemma}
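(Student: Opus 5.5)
We have from Lemma \ref{l8} the bounds $n\in L^\infty_{T^*}L^2_x$, $\na c\in L^\infty_{T^*}L^2_x\cap L^2_{T^*}H^1_x$, $\na n\in L^2_{T^*}L^2_x$ and $u\in L^\infty_{T^*}H^1_x$, together with $\|c\|_{L^\infty}\le C_0$ from \eqref{a36}. The plan is to bootstrap these by a maximal parabolic regularity argument for $c$, coupled with an $L^r$ energy estimate for $n$. The only available control on $u$ is $L^\infty_{T^*}H^1_x$, so $u\in L^\infty_{T^*}L^s_x$ for every $s<\infty$ by Sobolev embedding in 2D. Avoiding any use of $\na u\in L^\infty$ is the point of the whole argument.

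\textbf{Step 1 (maximal regularity for $c$).} We view the $c$-equation as the inhomogeneous heat equation
\[
c_t-\Delta c=-u\cdot\na c-nc=:F, \qquad \partial_\nu c+\kappa c=\kappa\gamma .
\]
We subtract a time-independent lift $h$ with $\partial_\nu h+\kappa h=\kappa\gamma$, $h\in H^3$, which exists by the regularity of $\kappa,\gamma$. Maximal $L^\rho_tL^r_x$ regularity for the Robin (or Neumann, after moving $\kappa c$ to the right side) Laplacian then gives
\[
\|c\|_{L^\rho_{T}W^{2,r}_x}\le C\big(\|c_0\|_{W^{2-2/\rho,r}}+\|F\|_{L^\rho_TL^r_x}+1\big).
\]
We first estimate $F$ with $r$ moderately above $2$. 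We have
\[
\|u\cdot\na c\|_{L^r}\le\|u\|_{L^{2r}}\|\na c\|_{L^{2r}},
\]
and Gagliardo--Nirenberg gives $\|\na c\|_{L^{2r}}\le C\|c\|_{L^\infty}^{1/2}\|\na^2 c\|_{L^r}^{1/2}$, which can be absorbed by Young. Also $\|nc\|_{L^r}\le C_0\|n\|_{L^r}$. This yields
\[
\|c\|_{L^\rho_tW^{2,r}_x}\le C\big(1+\|n\|_{L^\rho_tL^r_x}\big).
\]
Since the initial data $c_0$ lies only in $H^2$, we take $\rho$ large and restrict $r$ so that the trace space contains $H^2$; for larger $r$ we would use a smooth cutoff in time or the local solution on $[0,T_{loc}/2]$.

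\textbf{Step 2 ($L^r$ bound for $n$).} We test the $n$-equation with $n^{r-1}$. The transport term vanishes by $\na\cdot u=0$ and $u\cdot\nu=0$, and the boundary flux vanishes by the no-flux condition. This gives
\[
\frac1r\frac{d}{dt}\|n\|_{L^r}^r+\frac{4(r-1)}{r^2}\|\na n^{r/2}\|_{L^2}^2=(r-1)\int n^{r-1}\na n\cdot\na c
=-\frac{r-1}{r}\int n^r\Delta c+\frac{r-1}{r}\int_\Gamma n^r\partial_\nu c .
\]
The boundary term is bounded by $C\int_\Gamma n^r$, since $\partial_\nu c=\kappa(\gamma-c)$ is bounded by \eqref{a36} provided $\kappa\in L^\infty(\Gamma)$, which follows from $H^{3/2}(\Gamma)$ in 1D. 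A trace inequality and Gagliardo--Nirenberg then absorb it into the dissipation plus $C\|n\|_{L^r}^r$, using $\|n\|_{L^1}$, which is conserved. For the interior term we use
\[
\int n^r|\Delta c|\le\|n^{r/2}\|_{L^4}^2\|\Delta c\|_{L^2}
\]
for $r$ near $2$, or more generally $\|n^{r/2}\|^2_{L^{2s'}}\|\Delta c\|_{L^s}$. Gagliardo--Nirenberg gives $\|n^{r/2}\|_{L^4}^2\le C\|n^{r/2}\|_{L^2}\|n^{r/2}\|_{H^1}$, so that
\[
\int n^r|\Delta c|\le \varepsilon\|\na n^{r/2}\|_{L^2}^2+C\|\Delta c\|_{L^2}^2\|n\|_{L^r}^r .
\]
Since $\|\Delta c\|_{L^2}^2\in L^1_t$ by Lemma \ref{l8}, Gronwall yields $n\in L^\infty_{T^*}L^r_x$ for every finite $r$ directly, without needing Step 1 first. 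I expect this is the cleanest route.

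\textbf{Step 3 (closing).} With $n\in L^\infty_tL^r_x$ for all $r$, Step 1 gives $c\in L^\rho_tW^{2,r}_x$ for all $\rho,r$. For $\na c\in L^\infty_tL^r_x$ we use either the interpolation embedding of the maximal regularity class, $W^{1,\rho}(L^r)\cap L^\rho(W^{2,r})\hookrightarrow C(W^{2-2/\rho,r})$, or a direct $L^r$ energy estimate on $|\na c|^{r-2}\na c$. The convection term there produces $\int|\na u||\na c|^r$, which is exactly what we cannot afford. The embedding route avoids it, so I choose it; it needs $\|c_t\|_{L^\rho L^r}$, which is bounded by the equation.

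\textbf{Main obstacle.} The delicate step is the boundary terms, in particular the trace term in Step 2 for general $r$ and the handling of initial-data regularity in Step 1. The coupling with $u$ is harmless here thanks to the $H^1$ bound on $u$.
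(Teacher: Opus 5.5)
Your argument is correct in substance, with one caveat about the initial layer discussed below. For $n$ it is essentially the paper's argument. The paper also tests with $n^{r-1}$, but it applies Young's inequality directly to $(r-1)\int_\Omega n^{r-1}\nabla n\cdot\nabla c\,dx$, bounds $\int_\Omega n^r|\nabla c|^2dx\le\|n^{r/2}\|_{L^4_x}^2\|\nabla c\|_{L^4_x}^2$, and closes Gronwall with $\|\nabla c\|_{L^4_x}^4\in L^1(0,T^*)$ from \eqref{a13}. This avoids the boundary integral you create by moving the derivative onto $c$. Your trace-inequality treatment of that integral, and closing with $\|\Delta c\|_{L^2_x}^2\in L^1(0,T^*)$, work equally well.

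For $c$ the two routes genuinely differ. The paper first turns the Robin problem into a homogeneous Neumann one via $\tilde c=e^{g_1}(g_2-c)$. It then bootstraps $\nabla\tilde c$ with Neumann heat-semigroup $L^p$--$L^q$ estimates: first in $L^3$, using $\nabla c\in L^4_{T^*}L^4_x$ to put $u\cdot\nabla c$ in $L^2$, then in every $L^r$. Only after that does it invoke maximal regularity, controlling $u\cdot\nabla c$ by the already known bound on $\nabla c$ in $L^\infty_{T^*}L^s_x$. You apply maximal regularity first, absorb the convection term through $\|\nabla c\|_{L^{2r}_x}^2\le C\|c\|_{L^\infty_x}\|c\|_{W^{2,r}_x}$, and read off $\nabla c\in L^\infty_{T^*}L^r_x$ from the trace embedding. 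Your route is shorter and needs no two-stage bootstrap. Its cost is maximal regularity for a Robin (or inhomogeneous Neumann) problem, which the paper's exponential change of unknown avoids; you could simply borrow that transformation.

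The initial layer needs two corrections.

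First, ``take $\rho$ large'' goes the wrong way: the trace space $B^{2-2/\rho}_{r,\rho}$ shrinks as $\rho$ grows. For the bound on $\nabla c$, choose $\rho\in(2,\frac{2r}{r-2})$. Then $1<2-\frac2\rho<1+\frac2r$, so $H^2(\Omega)\hookrightarrow B^{2-2/\rho}_{r,\rho}\hookrightarrow W^{1,r}(\Omega)$ for every $r<\infty$, and no cutoff is needed.

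Second, your fallback for $\|c\|_{L^\rho_{T^*}W^{2,r}_x}$ with large $\rho,r$ does not work. Using the local solution on $[0,T_{loc}/2]$ fails because Proposition \ref{p1} only gives $C([0,T_{loc}];H^2)\cap L^2(0,T_{loc};H^3)$ there. In fact the bound is out of reach for general $H^2$ data when $\rho(\frac12-\frac1r)>1$:
\begin{itemize}
\item $c\in L^\rho(0,T;W^{2,r})$ would, via the equation, give $c_t\in L^\rho(0,T;L^r)$;
\item that would force $c_0\in B^{2-2/\rho}_{r,\rho}$ by the trace theorem;
\item $H^2$ does not embed into this space once $2-\frac2\rho>1+\frac2r$.
\end{itemize}
The paper's \eqref{a22} silently drops the initial-data term, so it has the same defect.

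Only $(\rho,r)=(4,4)$ and $(4,2)$ are used later, in Lemmas \ref{l4} and \ref{l6}. Both satisfy $\rho(\frac12-\frac1r)\le 1$; the case $(4,4)$ is covered via $H^2\hookrightarrow B^{3/2}_{4,4}$ together with the compatibility condition on $c_0$. You should state and prove the $W^{2,r}$ bound under that restriction.
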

\begin{proof}
First, it follows from the Gagliardo-Nirenberg inequality and Lemma \ref{l8} that
\be\label{a13}
\begin{split}
\|\na c\|_{L^4_{T^*}L^4_x}\leq \|\na c\|_{L^\infty_{T^*}L^2_x}^{\frac{1}{2}}\|\na^2 c\|_{L^2_{T^*}L^2_x}^{\frac{1}{2}}\leq C_{T^*}.
\end{split}
\ee
Then taking the $L^2_x$ inner product of the first equation in \eqref{cns} with $n^{r-1}$ and using integration by parts, we obtain
\ben
\begin{split}
\frac{1}{r}\frac{d}{dt}\int_{\Omega} n^r\,dx+(r-1)\int_{\Omega}|\na n|^2n^{r-2}\,dx
=&(r-1)\int_{\Omega} n^{r-1}\na c\cdot \na n\,dx\\
\leq& \frac{r-1}{2}\int_{\Omega}|\na n|^2n^{r-2}\,dx+\frac{r-1}{2}\int_{\Omega}|\na c|^2n^{r}\,dx,
\end{split}
\enn
where 
\ben
\begin{split}
\int_{\Omega}|\na c|^2n^{r}\,dx
\leq& \|n^{\frac{r}{2}}\|_{L^4_x}^2\|\na c\|_{L^4_x}^2\\
\leq& C_0(\|\na n^{\frac{r}{2}}\|_{L^2_x}\|n^{\frac{r}{2}}\|_{L^2_x}+\|n\|_{L^2_x})\|\na c\|_{L^4_x}^2\\
=&C_0(\frac{r}{2}\|n^{\frac{r-2}{2}}\na n \|_{L^2_x}\|n\|_{L^r_x}^{\frac{r}{2}}+\|n\|_{L^2_x})\|\na c\|_{L^4_x}^2\\
\leq &\int_{\Omega}|\na n|^2n^{r-2}\,dx+C_0 r^2 \|\na c\|_{L^4_x}^4\|n\|_{L^r_x}^r+C_0\|n\|_{L^2_x}\|\na c\|_{L^4_x}^2.
\end{split}
\enn
Hence 
\ben
\begin{split}
\frac{1}{r}\frac{d}{dt}\int_{\Omega} n^r\,dx
\leq &C_r  (\|\na c\|_{L^4_x}^4\|n\|_{L^r_x}^r+\|n\|_{L^2_x}\|\na c\|_{L^4_x}^2),
\end{split}
\enn
where the constant $C_r$ depends on $r$.
Applying Gronwall's inequality to the above estimates and using \eqref{a13} and Lemma \ref{l8}, one gets
\be\label{a14}
\|n\|_{L^\infty_{T^*}L^r_x}\leq C_{r,T^*},\qquad \forall\ r\geq 2,
\ee
with the constant $C_{r,T^*}$ depending on $r$ and $T^*$.
We proceed to estimating $\|\na c\|_{L^\infty_{T^*}L^r_x}$. By the trace theorem (see e.g. \cite[Theorem 8.3]{lions&magenes}), there exists $g_1, g_2\in H^{3}(\Omega)$ satisfying
\be\label{b01}
\frac{\partial g_1}{\partial \nu}(x)=\kappa(x),\quad g_2(x)=\gamma(x)\ \ \text{with}\ \ \frac{\partial g_2}{\partial\nu}(x)=0 \ \ \text{on}\ \ \Gamma. 
\ee
and
\be\label{a15}
\|g_1\|_{H^3_x}\leq C_0\|\kappa\|_{H^{\frac{3}{2}}(\Gamma)}\leq C_0,\qquad\|g_2\|_{H^3_x}\leq C_0\|\gamma\|_{H^{\frac{5}{2}}(\Gamma)}\leq C_0.
\ee
Let $\tilde{c}(x,t)=e^{g_1(x)}(g_2(x)-c(x,t))$. Then $\tilde{c}$ solves
\ben
\left\{
\begin{array}{lll}
\tilde{c}_t=\Delta \tilde{c}+(u\cdot\na c+nc-\Delta g_2)e^{g_1}+|\na g_1|^2\tilde{c}-\Delta g_1 \tilde{c}-2\na g_1\cdot \na\tilde{c},\quad   x\in \Om,\ t>0,\\
\tilde{c}(x,0)=\tilde{c}_0:=e^{g_1(x)}(g_2(x)-c_0(x)),\quad x\in \Omega,\\
\frac{\partial \tilde{c}}{\partial \nu}=0,\quad x\in \Gamma,\ \ t>0
\end{array}
\right.
\enn
and thus
\ben
\begin{split}
\tilde{c}(\cdot, t)=&e^{t\Delta}\tilde{c}_0+\int_0^t e^{(t-s)\Delta}[(u\cdot\na c+nc-\Delta g_2)e^{g_1}](\cdot,s)\,ds\\
&+\int_0^t e^{(t-s)\Delta}(|\na g_1|^2\tilde{c}-\Delta g_1 \tilde{c}-2\na g_1\cdot \na\tilde{c})(\cdot,s)\,ds,
\end{split}
\enn
where $(e^{t\Delta})_{t\geq 0}$ is the Neumann heat semigroup.  
Applying Lemma 1.3(ii)-(iii) in \cite{winkler2010} and using \eqref{a15}, one gets
\be\label{a16}
\begin{split}
&\|\na\tilde{c}(\cdot,t)\|_{L^r_x}\\
\leq& \|\na \tilde{c}_0\|_{L^r_x}+\int_0^t (t-s)^{-\frac{1}{2}-(\frac{1}{2}-\frac{1}{r})}(\|(u\cdot\na c)(\cdot,s)\|_{L^2_x}+\|(nc)(\cdot,s)\|_{L^2_x}+C_0)\,ds\\
&+C_0\int_0^t (t-s)^{-\frac{1}{2}-(\frac{1}{2}-\frac{1}{r})}(\|\tilde{c}(\cdot,s)\|_{L^2_x}+\|\na \tilde{c}(\cdot,s)\|_{L^2_x})\,ds\\
\end{split}
\ee
for $r\geq 2$, $t\in (0,T^*]$. 
The Sobolev embedding inequality, \eqref{a36} and Lemma \ref{l8} entail that
\be\label{a17}
\begin{split}
\|(u\cdot\na c)(\cdot,s)\|_{L^2_x}\leq &\|u\|_{L^\infty_{T^*}L^4_x}\|\na c(\cdot,s)\|_{L^4_x}\\
\leq& C_0\|u\|_{L^\infty_{T^*}H^1_x}\|\na c(\cdot,s)\|_{L^4_x}\\
\leq& C_{T^*}\|\na c(\cdot,s)\|_{L^4_x},\qquad \forall\ s\in (0,T^*]
\end{split}
\ee
and that
\be\label{a19}
\begin{split}
 &\|(nc)(\cdot,s)\|_{L^2_x}\leq \|n\|_{L^\infty_{T^*}L^2_x}\|c\|_{L^\infty_{T^*}L^\infty_x}\leq C_{T^*},\qquad \forall\ s\in (0,T^*].
\end{split}
\ee
It follows from Lemma \ref{l8}, the definition of $\tilde{c}$ and \eqref{a15} that
\be\label{a43}
\begin{split}
\|\tilde{c}(\cdot,s)\|_{L^2_x}+\|\na \tilde{c}(\cdot,s)\|_{L^2_x}\leq C_0(\|c(\cdot,s)\|_{L^2_x}+\|\na c(\cdot,s)\|_{L^2_x}+1)
\leq C_{T^*},\qquad \forall\ s\in (0,T^*].
\end{split}
\ee
Setting $r=3$ in \eqref{a16} and using \eqref{a17}, \eqref{a19}, \eqref{a43}, \eqref{a15} and \eqref{a13}, we have
\be\label{a18}
\begin{split}
\|\na\tilde{c}(\cdot,t)\|_{L^3_x}\leq &C_0(1+\|c_0\|_{W^{1,3}_x})+C_{T^*}\int_0^t (t-s)^{-\frac{2}{3}}(\|\na c(\cdot,s)\|_{L^4_x}+1)\,ds\\
\leq &C_0(1+\|c_0\|_{H^2_x})+C_{T^*}\|\na c\|_{L^4_{T^*}L^4_x}\big[\int_0^{t} (t-s)^{-\frac{8}{9}}\,ds\big]^{3/4}\\
\leq& C_{T^*},\qquad \forall\ t\in (0,T^*]
\end{split}
\ee
which, in conjunction with the definition of $\tilde{c}$, \eqref{a15}, Lemma \ref{l8} and the Sobolev embedding inequality gives
\be\label{a20}
\begin{split}
\|(u\cdot\na c)(\cdot,s)\|_{L^2_x}\leq& \|u\|_{L^\infty_{T^*}L^6_x}\|\na c\|_{L^\infty_{T^*}L^3_x}\\
\leq& C_0\|u\|_{L^\infty_{T^*}H^1_x}(\|\na \tilde{c}\|_{L^\infty_{T^*}L^3_x}+1)\\
\leq& C_{T^*},\qquad \forall\ s\in (0,T^*].
\end{split}
\ee
Inserting \eqref{a20}, \eqref{a19} and \eqref{a43} into \eqref{a16}, we end up with
\ben
\begin{split}
\|\na\tilde{c}(\cdot,t)\|_{L^r_x}\leq C_0(1+\|c_0\|_{W^{1,r}_x})+C_{T^*}\int_0^t (t-s)^{-1+\frac{1}{r}}\,ds
\leq C_{r,T^*}
\end{split}
\enn
for each $t\in (0,T^*]$ and $r\geq 2$, 
and thus
\be\label{a21}
\|\na c\|_{L^\infty_{T^*}L^r_x}\leq C_{r,T^*},\qquad \forall r\,\geq 2
\ee
thanks to the definition of $\tilde{c}$ and \eqref{a15}.
By the maximal Sobolev regularity (cf. \cite[3.1 Theorem]{matthias-jan1997}) applied to the heat semigroup with Neumann boundary conditions, Lemma \ref{l8}, \eqref{a14} and \eqref{a21}, one deduces for each $r\geq 2$ that
\be\label{a22}
\begin{split}
&\|\tilde{c}_t\|_{L^\rho_{T^*}L^{r}_x}+\|\tilde{c}\|_{L^\rho_{T^*}W^{2,r}_x}\\
\leq &C_0\|(u\cdot\na c+nc-\Delta g_2)e^{g_1}+|\na g_1|^2\tilde{c}-\Delta g_1 \tilde{c}-2\na g_1\cdot \na\tilde{c}\|_{L^\rho_{T^*}L^r_x}\\
\leq& C_0{(T^*)}^{1/\rho}(\|u\|_{L^\infty_{T^*}L^\alpha_x}\|\na c\|_{L^\infty_{T^*}L^{\frac{\alpha r}{\alpha-r}}_x}+\|n\|_{L^\infty_{T^*}L^\alpha_x}\|c\|_{L^\infty_{T^*}L^{\frac{\alpha r}{\alpha-r}}_x})\\
&+C_0{(T^*)}^{1/\rho}(\|\na c\|_{L^\infty_{T^*}L^r_x}+\|c\|_{L^\infty_{T^*}L^r_x}+1)\\
\leq &C_0{(T^*)}^{1/\rho}(\|u\|_{L^\infty_{T^*}H^1_x}\|\na c\|_{L^\infty_{T^*}L^{\frac{\alpha r}{\alpha-r}}_x}+\|n\|_{L^\infty_{T^*}L^\alpha_x}\|c\|_{L^\infty_{T^*}H^1_x})\\
&+C_0{(T^*)}^{1/\rho}(\|\na c\|_{L^\infty_{T^*}L^r_x}+\|c\|_{L^\infty_{T^*}H^1_x})\\
\leq &C_{r,\rho,T^*},
\end{split}
\ee
where $C_{r,\rho,T^*}$ is a constant depending on $r, \rho$ and $T^*$, and $\alpha$ is chosen to be larger than $r$. Collecting \eqref{a22}, \eqref{a21} and \eqref{a14} and using the definition of $\tilde{c}$ and \eqref{a15}, one derives the desired estimates. The proof is completed. 

\end{proof}
\begin{lemma}\label{l4} Suppose that the assumptions in Lemma \ref{l8} hold true. Then there exists a constant $C_{T^*}$ depending on $T^*$, such that
\ben
\|n\|_{L^\infty_{T^*}L^\infty_x}+\|\na n\|_{L^\infty_{T^*}L^2_x}+\|\Delta n\|_{L^2_{T^*}L^2_x}\leq C_{T^*}.
\enn
\end{lemma}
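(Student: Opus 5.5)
We will obtain all three bounds from a single energy estimate for $\na n$, closed with the bounds of Lemma \ref{l3}. The $L^\infty$ bound then follows by Sobolev embedding, or alternatively by a semigroup argument.

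\textbf{Energy estimate for $\na n$.} We test the first equation of \eqref{cns} with $-\Delta n$. Since the boundary condition is the no-flux condition $\partial_\nu n=n\partial_\nu c=n\kappa(\gamma-c)$, we have $\partial_\nu n\neq0$ in general. Integration by parts gives
\begin{equation*}
\frac12\frac{d}{dt}\|\na n\|_{L^2_x}^2+\|\Delta n\|_{L^2_x}^2=\int_\Omega (u\cdot\na n)\Delta n\,dx+\int_\Omega \na\cdot(n\na c)\Delta n\,dx+\frac{d}{dt}\text{-type boundary terms},
\end{equation*}
where the boundary contribution is $\int_\Gamma \partial_\nu n\, n_t\,dS$.

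To avoid this contribution, we plan instead to work with $\|\na n\|_{L^2_x}^2$ directly and rewrite the equation as $n_t=\na\cdot(\na n-n\na c)-u\cdot\na n$. We then test with $-\na\cdot(\na n-n\na c)$, which vanishes in the normal component on $\Gamma$, and track the cross terms. If that rearrangement is messy, a simpler route is the analogue of the $\tilde c$ trick: test with $n_t$ and use the zero-flux form, so that the only boundary term is the time derivative $\frac{d}{dt}\frac12\int_\Gamma \kappa(\gamma-c)n^2dS$ plus lower-order pieces. The lower-order pieces are controlled by the trace theorem, $\|n\|_{L^\infty}$-free bounds $\|n\|_{L^4(\Gamma)}^2\le C\|n\|_{L^4}\|n\|_{W^{1,4/3}}$, and $\|c_t\|$ from \eqref{a22}.

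\textbf{Interior terms.} These are handled with Lemma \ref{l3}:
\begin{equation*}
\|n\Delta c\|_{L^2}\le \|n\|_{L^4}\|\Delta c\|_{L^4},\qquad \|\na n\cdot\na c\|_{L^2}\le\|\na n\|_{L^2}^{1/2}\|\na n\|_{H^1}^{1/2}\|\na c\|_{L^4}.
\end{equation*}
Here $\|c\|_{L^\rho_{T^*}W^{2,4}_x}$ is finite for every $\rho<\infty$, and $\|\na c\|_{L^\infty_{T^*}L^r_x}$ is finite.

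The convective term is the delicate one. It is easiest when we test with $-\Delta n$ (ignoring the boundary): $\int (u\cdot\na n)\Delta n$ is bounded by $\|u\|_{L^\infty}\|\na n\|\|\Delta n\|$, which is bad because $u\in H^1$ only. Instead we integrate by parts, $\int (u\cdot\na n)\Delta n=-\int \partial_iu_j\partial_jn\partial_in+\text{(boundary term, vanishing since }u\cdot\nu=0\text{ and }\frac12 u\cdot\na|\na n|^2\text{)}$, up to terms involving $\partial_\nu n$ on $\Gamma$. This is bounded by $\|\na u\|_{L^2}\|\na n\|_{L^4}^2\le C\|\na u\|_{L^2}\|\na n\|_{L^2}\|n\|_{H^2}$. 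Since $\|\na u\|_{L^\infty_tL^2}$ is bounded by Lemma \ref{l8}, Young's inequality absorbs this term and leaves $C\|\na n\|^2$. Elliptic regularity for the Neumann-type problem, $\|n\|_{H^2}\le C(\|\Delta n\|+\|n\|_{H^1}+\|\partial_\nu n\|_{H^{1/2}(\Gamma)})$, is combined with the bound $\|n\kappa(\gamma-c)\|_{H^{1/2}(\Gamma)}\le C\|n\|_{H^1}(1+\|c\|_{W^{1,4}})$, which lets us absorb $\|n\|_{H^2}$ into $\|\Delta n\|$. Gronwall then gives $\|\na n\|_{L^\infty_{T^*}L^2}+\|\Delta n\|_{L^2_{T^*}L^2}\le C_{T^*}$.

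\textbf{$L^\infty$ bound and main obstacle.} For $\|n\|_{L^\infty}$ we use the Neumann semigroup representation, exactly as for $\tilde c$. Writing $n_t=\Delta n-\na\cdot(n\na c+un)$ with the flux boundary condition, the variation-of-constants formula with Lemma 1.3(iv) of \cite{winkler2010} gives
\begin{equation*}
\|n(t)\|_{L^\infty}\le \|n_0\|_{L^\infty}+C\int_0^t(t-s)^{-1/2-1/r}\|n\na c+nu\|_{L^r}\,ds.
\end{equation*}
This is finite for $r>2$, since $\|n\|_{L^{2r}}$, $\|\na c\|_{L^{2r}}$ and $\|u\|_{L^{2r}}$ (the last via $H^1$) are bounded. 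We expect the main obstacle to be the boundary terms arising from the inhomogeneous flux condition $\partial_\nu n=n\kappa(\gamma-c)$ in the $H^1$ estimate. They should be handled as in Lemma \ref{l0}, via the trace theorem and interpolation, using the already established $L^\infty_tW^{1,r}$ bound on $c$ and the $L^\rho_tW^{2,r}$ bound.
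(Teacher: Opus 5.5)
Your proposal has a genuine gap in the $H^1$ estimate, which is the core of the lemma. It sits exactly at the boundary term you flag as the main obstacle, and none of your three variants removes it. Testing with $-\Delta n$ gives $\int_\Gamma \partial_\nu n\, n_t\,dS$ with $\partial_\nu n=n\kappa(\gamma-c)$, and
\begin{equation*}
\int_\Gamma \kappa(\gamma-c)\,n\,n_t\,dS=\frac12\frac{d}{dt}\int_\Gamma\kappa(\gamma-c)\,n^2\,dS+\frac12\int_\Gamma \kappa\, c_t\, n^2\,dS .
\end{equation*}
Testing with $n_t$ in zero-flux form, or with $-\nabla\cdot(\nabla n-n\nabla c)$ while differentiating $\|\nabla n\|_{L^2_x}^2$, leaves $\int_\Omega n\nabla c\cdot\nabla n_t\,dx$. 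Moving the derivative off $n_t$, in space or in time, brings back either the same boundary integral or $\nabla c_t$.

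The first term above can be absorbed into a modified energy. The piece $\frac12\int_\Gamma\kappa c_t n^2\,dS$, which you call lower order, is not: it needs a trace of $c_t$ on $\Gamma$. At this stage you only have the interior bound $\tilde c_t\in L^\rho_{T^*}L^r_x$ from \eqref{a22}, which gives no trace. Control of $\nabla c_t$ comes only in Lemma \ref{l6}, whose proof uses the present lemma. Lemma \ref{l0} does not help either. There the time-derivative boundary terms were exact derivatives such as $\frac{d}{dt}\int_\Gamma\kappa c^2\,dS$, whereas here the weight $\kappa(\gamma-c)$ multiplying $n\,n_t$ depends on $t$. So the bounds on $\|\nabla n\|_{L^\infty_{T^*}L^2_x}$ and $\|\Delta n\|_{L^2_{T^*}L^2_x}$ are not established.

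The missing idea is the change of unknown $\tilde n=ne^{-c}$, which is the real analogue of the $\tilde c$ trick. Since $\nabla\tilde n=e^{-c}(\nabla n-n\nabla c)$, the no-flux condition becomes $\partial_\nu\tilde n=0$. After eliminating $c_t$ with the $c$-equation one gets $\tilde n_t=\Delta\tilde n+\nabla\cdot(\tilde n\nabla c-u\tilde n)+nc\tilde n-2\tilde n\Delta c$, with no $c_t$ anywhere. Your instinct to use $(\nabla n-n\nabla c)\cdot\nu=0$ is right, but the energy should be built from $e^{c}\nabla\tilde n$, not from $\nabla n$.

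Testing the $\tilde n$-equation with $-\Delta\tilde n$ produces no boundary term. The right-hand side is bounded by $\|\nabla\tilde n\|_{L^4_x}(\|\nabla c\|_{L^4_x}+\|u\|_{H^1_x})\|\Delta\tilde n\|_{L^2_x}$ and by $\|\tilde n\|_{L^\infty_x}(\|\Delta c\|_{L^2_x}+\|n\|_{L^2_x})\|\Delta\tilde n\|_{L^2_x}$. Gagliardo--Nirenberg on $\nabla\tilde n$ handles the first bound, so your integration by parts of the convective term is unnecessary. Gronwall and the definition of $\tilde n$ then give the bounds on $\nabla n$ and $\Delta n$.

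Two smaller points. First, the $L^\infty$ bound does not ``follow by Sobolev embedding'' in two dimensions: $H^1\not\hookrightarrow L^\infty$, and $\Delta n\in L^2_{T^*}L^2_x$ only gives $L^2_{T^*}L^\infty_x$.

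Second, your semigroup argument for $\|n\|_{L^\infty_{T^*}L^\infty_x}$ is correct and differs from the paper's, which works with $\tilde n$ and fractional powers of the Neumann operator in $L^3$. Because the total flux $\nabla n-n\nabla c-un$ has zero normal component, the Duhamel formula for $n$ holds with $e^{\tau\Delta}\nabla\cdot W$ defined by duality, $\langle e^{\tau\Delta}\nabla\cdot W,\psi\rangle=-\int_\Omega W\cdot\nabla e^{\tau\Delta}\psi\,dx$. So Lemma 1.3(iv) of \cite{winkler2010} applies legitimately, and the bound uses only Lemmas \ref{l8} and \ref{l3}. This is, if anything, cleaner than applying that estimate to $\tilde n$. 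There $\partial_\nu\tilde n=0$ but $(\tilde n\nabla c-u\tilde n)\cdot\nu=\tilde n\kappa(\gamma-c)\neq0$, so an extra boundary term has to be accounted for.
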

\begin{proof}
Define
\ben
\tilde{n}(x,t)=n(x,t)e^{-c(x,t)}.
\enn
Then the first and second equations and the boundary conditions of \eqref{cns} entail that
\be\label{a23}
\left\{
\begin{array}{lll}
\tilde{n}_t=\Delta \tilde{n}+\na\cdot(\tilde{n}\na c-u\tilde{n})+nc\tilde{n}-2\tilde{n}\Delta c,\quad   x\in \Om,\ t>0,\\
\tilde{n}(x,0)=\tilde{n}_0:=n_0e^{-c_0},\quad x\in \Omega,\\
\frac{\partial \tilde{n}}{\partial \nu}=0,\quad x\in \Gamma,\ \ t>0
\end{array}
\right.
\ee
and thus
\ben
\begin{split}
\tilde{n}(\cdot,t)=&e^{t\Delta}\tilde{n}_0+\int_0^t e^{(t-s)\Delta}\nabla\cdot (\tilde{n}\na c-u\tilde{n})(\cdot,s)\,ds
+\int_0^t e^{(t-s)\Delta}(nc\tilde{n}-2\tilde{n}\Delta c)(\cdot,s)\,ds\\
=:&\,J_1+J_2+J_3
\end{split}
\enn
for each $t\in (0,T^*]$, where $(e^{t\Delta})_{t\geq 0}$ is the Neumman heat semigroup. Let $B$ be the realization of $-\Delta+1$ with homogeneous Neumann boundary condition in $L^3(\Omega)$ and $\beta\in (\frac{1}{3},\frac{1}{2})$. Then $D(B^\beta)\hookrightarrow C^0(\bar{\Omega})$ and it follows from Lemma 1.3(iv) in \cite{winkler2010}, Lemma \ref{l8} and Lemma \ref{l3} that
\ben
\begin{split}
\|J_2\|_{L^\infty_x}\leq& C_0\int_0^t \|B^{\beta}e^{-(t-s)(B-1)}\nabla\cdot (\tilde{n}\na c-u\tilde{n})(\cdot,s)\|_{L^3_x}\,ds\\
\leq &C_0\int_0^t(t-s)^{-\frac{1}{2}-\beta}\|(\tilde{n}\na c-u\tilde{n})(\cdot,s)\|_{L^3_x}\,ds\\
\leq &C_0\int_0^t(t-s)^{-\frac{1}{2}-\beta}\|n(\cdot,s)\|_{L^{12}_x}(\|\na c(\cdot,s)\|_{L^4_x}+\|u(\cdot,s)\|_{L^4_x})\,ds\\
\leq &C_{0}\|n(\cdot,s)\|_{L^\infty_{T^*}L^{12}_x}(\|\na c\|_{L^\infty_{T^*}L^4_x}+\|u\|_{L^\infty_{T^*}L^4_x})\int_0^t(t-s)^{-\frac{1}{2}-\beta}\,ds\\
\leq& C_{T^*}(1+(T^*)^{\frac{1}{2}-\beta}).
\end{split}
\enn
Lemma 1.3(ii) in \cite{winkler2010}, the fact $c(x,t)\geq 0$ in $\Omega\times(0,T^*)$, \eqref{a36} and Lemma \ref{l3} yield
\ben
\begin{split}
\|J_3\|_{L^\infty_x}\leq C_0\|J_3\|_{W^{1,3}_x}
\leq&C_0 \int_0^t [\|\na e^{(t-s)\Delta}(nc\tilde{n}-2\tilde{n}\Delta c)(\cdot,s)\|_{L^3_x}+\|e^{(t-s)\Delta}(nc\tilde{n}-2\tilde{n}\Delta c)(\cdot,s)\|_{L^3_x}]\,ds\\
\leq &C_0\int_0^t[1+(t-s)^{-\frac{1}{2}}](\|n\|_{L^{4}_x}\|c\|_{L^{\infty}_x}+\|\Delta c\|_{L^{4}_x})\|n\|_{L^{12}_x}\,ds\\
\leq &C_0\|n\|_{L^\infty_{T^*}L^{4}_x}\|c\|_{L^\infty_{T^*}L^{\infty}_x}\|n\|_{L^\infty_{T^*}L^{12}_x}
\int_0^t[1+(t-s)^{-\frac{1}{2}}]\,ds\\
&+C_0\|\Delta c\|_{L^4_{T^*}L^{4}_x}\|n\|_{L^\infty_{T^*}L^{12}_x}\big\{\int_0^t[1+(t-s)^{-\frac{1}{2}}]^{\frac{4}{3}}\,ds\big\}^{\frac{3}{4}}\,ds\\
\leq &C_{T^*}[1+(T^*)^{\frac{1}{2}}]+C_{T^*}[1+(T^*)^{\frac{1}{3}}]^{\frac{3}{4}}.
\end{split}
\enn
Collecting the above estimates for $J_2$, $J_3$ and using the fact $\|J_1\|_{L^\infty_x}\leq \|\tilde{n}_0\|_{L^\infty_x}\leq \|n_0\|_{L^\infty_x}\leq C_0$ and \eqref{a36}, we conclude that
\be\label{a24}
\|n\|_{L^\infty_{T^*}L^\infty_x}\leq C_{T^*}.
\ee 
Taking the $L^2_{x}$ inner product of the equation in \eqref{a23} with $-\Delta \tilde{n}$ and using integration by parts, Lemma \ref{l8}, \eqref{a24} and \eqref{a36}, one gets
\ben
\begin{split}
\frac{1}{2}\frac{d}{dt}\|\na \tilde{n}\|_{L^2_x}^2+\|\Delta \tilde{n}\|_{L^2_x}^2
=&-\int_{\Omega}\na \tilde{n}\cdot\na c\Delta\tilde{n}\,dx
+\int_{\Omega}u\cdot\na \tilde{n}\Delta\tilde{n}\,dx+\int_{\Omega}\tilde{n}\Delta c\Delta\tilde{n}\,dx
-\int_{\Omega}nc \tilde{n}\Delta\tilde{n}\,dx\\
\leq &\|\na\tilde{n}\|_{L^4_x}(\|\na c\|_{L^4_{x}}+\|u\|_{L^4_{x}})\|\Delta\tilde{n}\|_{L^2_x}+\|\tilde{n}\|_{L^\infty_{x}}(\|\Delta c\|_{L^2_{x}}+\|n\|_{L^2_{x}}\|c\|_{L^\infty_{x}})\|\Delta \tilde{n}\|_{L^2_{x}}\\
\leq &C_0\|\na\tilde{n}\|_{L^2_x}^{\frac{1}{2}}(\|\na c\|_{L^4_{x}}+\|u\|_{H^1_{x}})\|\Delta\tilde{n}\|_{L^2_x}^{\frac{3}{2}}
+C_{T^*}(\|\Delta c\|_{L^2_{x}}+1)\|\Delta \tilde{n}\|_{L^2_{x}}\\
\leq &\frac{1}{2}\|\Delta \tilde{n}\|_{L^2_{x}}^2+C_0(\|\na c\|_{L^4_{x}}^4+\|u\|_{H^1_{x}}^4)\|\na\tilde{n}\|_{L^2_x}^2+C_{T^*}(\|\Delta c\|_{L^2_{x}}^2+1),
\end{split}
\enn
which, along with the Gronwall's inequality, Lemma \ref{l8} and \eqref{a13} gives
\ben
\|\na \tilde{n}\|_{L^\infty_{T^*}L^2_x}^2+\|\Delta \tilde{n}\|_{L^2_{T^*}L^2_x}^2\leq C_{T^*}
\enn
and thus 
\be\label{a25}
\|\na n\|_{L^\infty_{T^*}L^2_x}^2+\|\Delta n\|_{L^2_{T^*}L^2_x}^2\leq C_{T^*}
\ee
thanks to the definition of $\tilde{n}$, \eqref{a36}, Lemma \ref{l8} and \eqref{a24}. A combination of \eqref{a24} and \eqref{a25} completes the proof.

\end{proof}
\begin{lemma}\label{l5}
Let the assumptions in Lemma \ref{l8} hold true. Then there exists a constant $C_{T^*}$ depending on $T^*$, such that
\ben
\|u\|_{L^\infty_{T^*}L^\infty_x}+\|\na u\|_{L^\infty_{T^*}L^4_x}+\|u_t\|_{L^\infty_{T^*}L^2_x}\leq C_{T^*}.
\enn
\end{lemma}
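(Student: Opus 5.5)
We would work with the vorticity equation \eqref{a26}, $\omega_t+u\cdot\na\omega=\na^{\perp}\phi\cdot\na n$ (up to sign convention), and first get $\|\omega\|_{L^\infty_{T^*}L^4_x}$. Multiplying \eqref{a26} by $|\omega|^2\omega$ and integrating, the transport term drops out because $\na\cdot u=0$ and $u\cdot\nu=0$ on $\Gamma$. Hence
\[
\frac{d}{dt}\|\omega\|_{L^4_x}\le \|\na\phi\|_{L^\infty_x}\|\na n\|_{L^4_x}.
\]
The main obstacle is that Lemma \ref{l4} only gives $\na n\in L^\infty_{T^*}L^2_x$ and $\Delta n\in L^2_{T^*}L^2_x$, not $\na n\in L^1_{T^*}L^4_x$ directly.

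To get $\na n\in L^1_{T^*}L^4_x$, we would use elliptic regularity for $\tilde n=ne^{-c}$, which satisfies the homogeneous Neumann condition. Since Lemma \ref{l4} controls $\na\tilde n$ in $L^\infty L^2$ and $\Delta\tilde n$ in $L^2L^2$, we get $\|\tilde n\|_{H^2_x}\le C(\|\Delta\tilde n\|_{L^2_x}+\|\tilde n\|_{L^2_x})$, so $\tilde n\in L^2_{T^*}H^2_x$. Then Gagliardo--Nirenberg gives $\|\na\tilde n\|_{L^4_x}\le C\|\na\tilde n\|_{L^2_x}^{1/2}\|\tilde n\|_{H^2_x}^{1/2}$, hence $\na\tilde n\in L^4_{T^*}L^4_x$. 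Next,
\[
\na n=e^{c}(\na\tilde n+\tilde n\na c),
\]
and Lemma \ref{l3} together with \eqref{a24} bound $\na c\in L^\infty L^r$ and $n\in L^\infty L^\infty$. Therefore $\na n\in L^4_{T^*}L^4_x\subset L^1_{T^*}L^4_x$. Integrating the differential inequality gives $\omega\in L^\infty_{T^*}L^4_x$.

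From the div-curl system $\na\cdot u=0$, $\na\times u=\omega$, $u\cdot\nu=0$ on $\Gamma$ in the smooth bounded planar domain, we have
\[
\|\na u\|_{L^4_x}\le C_0(\|\omega\|_{L^4_x}+\|u\|_{L^2_x}).
\]
This is the $L^p$ analogue of the Calderón--Zygmund bound used before, applied through the stream function $\psi$ with $\Delta\psi=\omega$ and $\psi=0$ on $\Gamma$ (for a simply connected domain; otherwise add the finite-dimensional harmonic part, which is controlled by $\|u\|_{L^2_x}$). Combined with Lemma \ref{l8}, this bounds $u$ in $L^\infty_{T^*}W^{1,4}_x$, and the embedding $W^{1,4}\hookrightarrow L^\infty$ in two dimensions gives $\|u\|_{L^\infty_{T^*}L^\infty_x}\le C_{T^*}$.

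For $u_t$, we would write $u_t=-\mathbb{P}(u\cdot\na u+n\na\phi)$, where $\mathbb{P}$ is the Leray projector, bounded on $L^2$ and compatible with $u\cdot\nu=0$. Then
\[
\|u_t\|_{L^2_x}\le \|u\|_{L^\infty_x}\|\na u\|_{L^2_x}+\|n\|_{L^2_x}\|\na\phi\|_{L^\infty_x},
\]
which is bounded uniformly in time by the previous step and Lemmas \ref{l8}, \ref{l4}. The pressure is thus handled by the projection rather than by estimating $\na p$ separately.
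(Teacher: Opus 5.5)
Your proposal is correct and follows essentially the same route as the paper. Both arguments run an $L^4$ vorticity estimate by testing \eqref{a26} with $|\omega|^2\omega$, then use the Calder\'on--Zygmund (div--curl) bound with $W^{1,4}\hookrightarrow L^\infty$, and finally control $u_t$ through the $L^2$-orthogonality of the pressure gradient; your Leray projection is equivalent to the paper's testing of the momentum equation with $u_t$.

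Two of your steps are actually more careful than the paper. You get $\nabla n\in L^4_{T^*}L^4_x$ through $\tilde n=ne^{-c}$, using its homogeneous Neumann condition and elliptic regularity, whereas the paper uses $\|\nabla^2 n\|_{L^2_x}$ directly even though Lemma \ref{l4} only bounds $\Delta n$ and $n$ has inhomogeneous Neumann data. You also keep the lower-order $\|u\|_{L^2_x}$ term in the div--curl estimate, which the paper omits.
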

\begin{proof}
Testing \eqref{a26} with $|\omega|^2\omega$ in $L^2_x$ and using the Gagliardo-Nirenberg inequality, we obtain
\ben
\begin{split}
\frac{1}{4}\frac{d}{dt}\|\omega\|_{L^4_x}^4=&\int_{\Omega}(\na\times n)\cdot(\na^{\perp}\phi) |\omega|^{2}\omega\,dx\\
\leq& \|\omega\|_{L^4_x}^4+C_0\|\na n\|_{L^4_x}^4\|\na\phi\|_{L^\infty_x}^4\\
\leq &\|\omega\|_{L^4_x}^4+C_0\|\na n\|_{L^2_x}^{2}\|\na^2 n\|_{L^2_x}^2
\end{split}
\enn
for each $t\in (0,T^*]$. 
Applying Gronwall's inequality to the above inequality and using Lemma \ref{l4}, one gets
\ben
\|\omega\|_{L^\infty_{T^*}L^4_x}\leq C_{T^*},
\enn
which, in conjunction with the Calderón-Zygmund inequality (see e.g. \cite[Proposition 7.5]{Bahouri-Chemin-Dachin2011}), Sobolev embedding inequality and Lemma \ref{l8} leads to
\be\label{a27}
\begin{split}
\|\na u\|_{L^\infty_{T^*}L^4_x}\leq C_0\|\omega\|_{L^\infty_{T^*}L^4_x}\leq C_{T^*},\qquad \|u\|_{L^\infty_{T^*}L^\infty_x}\leq C_0\| u\|_{L^\infty_{T^*}W^{1,4}_x}\leq C_{T^*}.
\end{split}
\ee
Taking the $L^2_x$ inner product of the third equation of \eqref{cns} with $u_t$ and using integration by parts to have
\ben
\begin{split}
\int_{\Omega}u^2_t\,dx=&-\int_{\Omega}(u\cdot\na u)\cdot u_t\,dx-\int_{\Omega}n\na \phi\cdot u_t\,dx\\
\leq &C_0\|u\|_{L^\infty_x}\|\na u\|_{L^2_x}\|u_t\|_{L^2_x}+C_0\|n\|_{L^2_x}\|\na \phi\|_{L^\infty_x}\|u_t\|_{L^2_x}\\
\leq &\frac{1}{2}\|u_t\|_{L^2_x}^2+C_0\|u\|_{L^\infty_x}^2\|\na u\|_{L^2_x}^2+C_0\|n\|_{L^2_x}^2.
\end{split}
\enn
Hence, it follows from \eqref{a27} and Lemma \ref{l8} that
\ben
\|u_t\|_{L^\infty_{T^*}L^2_x}\leq C_0\|u\|_{L^\infty_{T^*}L^\infty_x}\|\na u\|_{L^\infty_{T^*}L^2_x}^2+C_0\|n\|_{L^\infty_{T^*}L^2_x}^2\leq C_{T^*}, 
\enn
which, in conjunction with \eqref{a27} gives the desired estimates and the proof is finished.

\end{proof}
\begin{lemma}\label{l6} Let the assumptions in Lemma \ref{l8} hold true. Then there exists a constant $C_{T^*}$ depending on $T^*$, such that
\ben
\|n_t\|_{L^\infty_{T^*}L^2_x}+\|c_t\|_{L^\infty_{T^*}L^2_x}+\|\na n_t\|_{L^2_{T^*}L^2_x}+\|\na c_t\|_{L^2_{T^*}L^2_x}\leq C_{T^*}
\enn
and
\ben
\|n\|_{L^\infty_{T^*}H^2_x}+\|c\|_{L^\infty_{T^*}H^2_x}+\|n\|_{L^2_{T^*}H^3_x}+\|c\|_{L^2_{T^*}H^3_x}\leq C_{T^*}.
\enn
\end{lemma}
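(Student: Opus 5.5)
We plan to differentiate the $n$- and $c$-equations of \eqref{cns} in time, carry out $L^2$ energy estimates for $n_t$ and $c_t$, and then recover $H^2$ and $H^3$ bounds by elliptic regularity. Throughout we use the bounds already available from Lemmas \ref{l8}--\ref{l5}: $n\in L^\infty_{T^*}L^\infty_x$, $\na n\in L^\infty_{T^*}L^2_x$, $\na c\in L^\infty_{T^*}L^r_x$, $c\in L^\rho_{T^*}W^{2,r}_x$, $u\in L^\infty_{T^*}W^{1,4}_x$ and $u_t\in L^\infty_{T^*}L^2_x$.

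\textbf{Step 1: the time-differentiated $c$-equation.} Working with $\tilde c=e^{g_1}(g_2-c)$, which satisfies a homogeneous Neumann condition, we differentiate its equation in $t$. The function $\tilde c_t$ then satisfies
\[
\tilde c_t{}_t=\Delta\tilde c_t+\big(u_t\cdot\na c+u\cdot\na c_t+n_tc+nc_t\big)e^{g_1}+(\text{lower order in }\tilde c_t),
\]
still with $\partial_\nu\tilde c_t=0$. We test this with $\tilde c_t$. The transport term $u\cdot\na c_t$ is harmless after integrating by parts, since $\na\cdot u=0$ and $u\cdot\nu=0$, and it only leaves a lower-order term involving $\na g_1$. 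The term $u_t\cdot\na c$ is bounded by $\|u_t\|_{L^2_x}\|\na c\|_{L^4_x}\|\tilde c_t\|_{L^4_x}$, and we absorb it using Gagliardo--Nirenberg. The term $n_tc$ is bounded by $\|n_t\|_{L^2_x}\|\tilde c_t\|_{L^2_x}$.

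\textbf{Step 2: the time-differentiated $n$-equation.} We differentiate the $n$-equation in $t$ and test with $n_t$. The no-flux condition $(\na n-n\na c)\cdot\nu=0$ differentiates to $(\na n_t-n_t\na c-n\na c_t)\cdot\nu=0$, so integration by parts produces no boundary terms. The resulting identity is
\[
\tfrac12\tfrac{d}{dt}\|n_t\|_{L^2_x}^2+\|\na n_t\|_{L^2_x}^2=\int_\Omega(n_t\na c+n\na c_t)\cdot\na n_t\,dx-\int_\Omega(u_t\cdot\na n)\,n_t\,dx .
\]
We estimate the right-hand side term by term.
\begin{itemize}
\item $\|n_t\|_{L^4_x}\|\na c\|_{L^4_x}\|\na n_t\|_{L^2_x}$ is handled by Gagliardo--Nirenberg together with $\|\na c\|_{L^\infty_{T^*}L^4_x}\le C$.
\item $\|n\|_{L^\infty_x}\|\na c_t\|_{L^2_x}\|\na n_t\|_{L^2_x}$ is controlled by $\tfrac14\|\na n_t\|_{L^2_x}^2+C\|\na c_t\|_{L^2_x}^2$. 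Here $\|\na c_t\|_{L^2_x}^2$ is absorbed by the dissipation from Step 1, after multiplying the $n_t$-estimate by a small constant.
\item The term with $u_t$ is integrated by parts as $\int n\,u_t\cdot\na n_t$, using $\na\cdot u_t=0$ and $u_t\cdot\nu=0$. It is then bounded by $\|n\|_{L^\infty_x}\|u_t\|_{L^2_x}\|\na n_t\|_{L^2_x}$.
\end{itemize}

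\textbf{Step 3: Gronwall and initial data.} We add the two estimates with suitable weights and apply Gronwall. This gives the first displayed bound, provided $\|n_t(0)\|_{L^2_x}$ and $\|c_t(0)\|_{L^2_x}$ are finite. Both follow by reading the equations at $t=0$, since $n_0,c_0\in H^2$ and $u_0\in H^3$.

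\textbf{Step 4: $H^2$ bounds.} We write the $c$-problem as an elliptic Neumann problem for $\tilde c$ at each fixed time:
\[
\Delta\tilde c=\tilde c_t-(u\cdot\na c+nc-\Delta g_2)e^{g_1}-\cdots\in L^\infty_{T^*}L^2_x .
\]
This gives $\tilde c\in L^\infty_{T^*}H^2_x$. For $\tilde n$ we use \eqref{a23}:
\[
\Delta\tilde n=\tilde n_t-\na\cdot(\tilde n\na c-u\tilde n)-nc\tilde n+2\tilde n\Delta c .
\]
Since $\na c\in L^4$, $\Delta c\in L^\infty_{T^*}L^4_x$ (from the previous step, via $W^{2,4}$ if needed), $\na\tilde n\in L^2$ and $\tilde n\in L^\infty$, the right-hand side lies in $L^\infty_{T^*}L^2_x$. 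Hence $\tilde n\in L^\infty_{T^*}H^2_x$, and therefore $n\in L^\infty_{T^*}H^2_x$. Note that $\tilde n\na c$ has divergence controlled by $|\na\tilde n||\na c|+|\tilde n||\Delta c|$, both in $L^2$.

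\textbf{Step 5: $H^3$ bounds.} We apply $H^3$ Neumann elliptic regularity, where the right-hand side needs $H^1$ control. We have $\na\tilde c_t\in L^2_{T^*}L^2_x$. The term $\na(u\cdot\na c)$ is in $L^2_{T^*}L^2_x$ because $\na u\in L^4$, $\na c\in L^4$, $u\in L^\infty$ and $\na^2c\in L^\infty_{T^*} L^2$. For $\tilde n$ we additionally need $\na\Delta c\in L^2_{T^*}L^2_x$, which comes from the $c\in L^2_{T^*}H^3_x$ bound just obtained, together with $\na\tilde n\in L^\infty_{T^*}L^4_x$ obtained from $H^2$.

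\textbf{Main obstacle.} We expect the hardest part to be the inhomogeneous Robin condition at the $c_t$ level. Working with $\tilde c$, which already has a homogeneous Neumann condition, should handle this. The remaining coupling $n\na c_t$ in the $n_t$-estimate requires choosing the weights carefully before applying Gronwall.
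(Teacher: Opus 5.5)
Your plan is correct in structure and proves both estimates, but it differs from the paper in a few places. You also have one local slip in Step 4 that needs fixing.

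\textbf{Comparison with the paper.} The paper never passes to $\tilde c$ for the time-derivative estimate. Since $\kappa$ and $\gamma$ are independent of $t$, differentiating the Robin condition gives $\partial_\nu c_t=-\kappa c_t$ on $\Gamma$. Testing the $c_t$-equation with $c_t$ therefore produces the boundary term $-\int_\Gamma \kappa c_t^2\,dS\le 0$, which is simply dropped. So the ``main obstacle'' you anticipate is not really one. Your $\tilde c_t$ route also works, but it costs extra lower-order terms such as $\int_\Omega \Delta g_1\,\tilde c_t^2\,dx$ (needing Gagliardo--Nirenberg, since $\Delta g_1$ is only in $H^1$) and $\int_\Omega \tilde c_t^2\,u\cdot\na g_1\,dx$.

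In the $n_t$-estimate the paper bounds the $u_t$-term by $\|u_t\|_{L^2_x}\|\na n\|_{L^4_x}\|n_t\|_{L^4_x}$, so it needs $\|\na n\|_{H^1_x}^2\in L^1(0,T^*)$, which comes from Lemma \ref{l4}. Your integration by parts to $\int_\Omega n\,u_t\cdot\na n_t\,dx$ uses only $\|n\|_{L^\infty_{T^*}L^\infty_x}$ and $\|u_t\|_{L^\infty_{T^*}L^2_x}$, which is slightly cleaner.

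For the $H^2$ and $H^3$ recovery, the paper writes $\|c\|_{H^2_x}\le \|c_t\|_{L^2_x}+\|u\cdot\na c\|_{L^2_x}+\|nc\|_{L^2_x}$, and analogously for $n$, suppressing the inhomogeneous boundary data. Your use of the homogeneous Neumann problems for $\tilde c$ and $\tilde n$ is the more careful justification of these elliptic bounds.

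\textbf{The slip in Step 4.} With only $\na\tilde n\in L^\infty_{T^*}L^2_x$ and $\na c\in L^\infty_{T^*}L^r_x$ for $r<\infty$, the product $\na c\cdot\na\tilde n$ is \emph{not} in $L^2_x$. You would need $\na c\in L^\infty_x$, which is not available at this stage. Use the absorption argument the paper uses for $\|\na n\|_{L^4_x}\|\na c\|_{L^4_x}$ instead:
$\|\na c\cdot\na\tilde n\|_{L^2_x}\le \|\na c\|_{L^4_x}\|\na\tilde n\|_{L^4_x}\le C\|\na c\|_{L^4_x}\|\na\tilde n\|_{L^2_x}^{1/2}\|\tilde n\|_{H^2_x}^{1/2}$.
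Then absorb $\tfrac12\|\tilde n\|_{H^2_x}$ into the left-hand side.

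Your claim $\Delta c\in L^\infty_{T^*}L^4_x$ is also unavailable, since Lemma \ref{l3} only gives $L^\rho_{T^*}W^{2,r}_x$ with $\rho<\infty$. You do not need it, though: $\|\tilde n\Delta c\|_{L^2_x}\le \|\tilde n\|_{L^\infty_x}\|\Delta c\|_{L^2_x}$, with $c\in L^\infty_{T^*}H^2_x$ from your $\tilde c$ step.

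In Step 5, the term $\na c\cdot\na^2\tilde n$ should be handled explicitly. Either use $\na c\in L^2_{T^*}L^\infty_x$, which follows from $c\in L^2_{T^*}H^3_x$, or absorb it in the same way. With these adjustments your argument is complete.
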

\begin{proof}
Differentiating the first equation of \eqref{cns} with respect to $t$ and testing the resulting equation with $n_t$ in $L^2_x$ and using integration by parts and Lemma \ref{l4}, one gets
\ben
\begin{split}
\frac{1}{2}\frac{d}{dt}&\|n_t\|_{L^2_x}^2+\|\na n_t\|_{L^2_x}^2\\
=&-\int_{\Omega} u_t\cdot\na n n_t\,dx
+\int_{\Omega}(n\na c_t+n_t \na c)\cdot\na n_t\,dx\\
\leq &\|u_t\|_{L^2_x}\|\na n\|_{L^4_x}\|n_t\|_{L^4_x}+(\|n\|_{L^\infty_x}\|\na c_t\|_{L^2_x}+\|n_t\|_{L^4_x}\|\na c\|_{L^4_x})\|\na n_t\|_{L^2_x}\\
\leq &C_0\|u_t\|_{L^2_x}\|\na n\|_{H^1_x}\|n_t\|_{L^2_x}^{\frac{1}{2}}\|\na n_t\|_{L^2_x}^{\frac{1}{2}}
+C_{T^*}(\|\na c_t\|_{L^2_x}+\|n_t\|_{L^2_x}^{\frac{1}{2}}\|\na n_t\|_{L^2_x}^{\frac{1}{2}}\|\na c\|_{H^1_x})\|\na n_t\|_{L^2_x}\\
\leq &\frac{1}{2}\|\na n_t\|_{L^2_x}^2+\frac{C_1}{2}\|\na c_t\|_{L^2_x}^2+C_{T^*}(\|\na c\|_{H^1_x}^4+1)\|n_t\|_{L^2_x}^2+C_0\|u_t\|_{L^2_x}^2\|\na n\|_{H^1_x}^2,
\end{split}
\enn
which, gives rise to
\be\label{a28}
\frac{d}{dt}\|n_t\|_{L^2_x}^2+\|\na n_t\|_{L^2_x}^2\leq C_1\|\na c_t\|_{L^2_x}^2+C_{T^*}(\|\na c\|_{H^1_x}^4+1)\|n_t\|_{L^2_x}^2+C_0\|u_t\|_{L^2_x}^2\|\na n\|_{H^1_x}^2,
\ee
where the constants $C_1$ and $C_{T^*}$ depend on $T^*$. 
Differentiating the second equation of \eqref{cns} with respect to $t$ and taking the $L^2_x$ inner product of the resulting equation with $c_t$ to have
\ben
\begin{split}
\frac{1}{2}\frac{d}{dt}&\|c_t\|_{L^2_x}^2+\|\na c_t\|_{L^2_x}^2+\|\sqrt{n}c_t\|_{L^2_x}^2\\
=&-\int_{\Omega} (u_t\cdot\na c+cn_t) c_t\,dx
+\int_{\Gamma}(\na c_t\cdot\nu)c_t\,dS\\
\leq &\|u_t\|_{L^2_x}\|\na c\|_{L^4_x}\|c_t\|_{L^4_x}+\|c\|_{L^\infty_x}\|n_t\|_{L^2_x}\|c_t\|_{L^2_x}
+\int_{\Gamma}[\kappa(\gamma-c)]_t c_t\,dS\\
\leq &C_0\|u_t\|_{L^2_x}\|\na c\|_{H^1_x}(\|c_t\|_{L^2_x}+\|\na c_t\|_{L^2_x})+C_0\|n_t\|_{L^2_x}\|c_t\|_{L^2_x}
-\int_{\Gamma}\kappa c_t^2\,dS\\
\leq &\frac{1}{2}\|\na c_t\|_{L^2_x}^2+C_0(\|u_t\|_{L^2_x}^2\|\na c\|_{H^1_x}^2+\|c_t\|_{L^2_x}^2+\|n_t\|_{L^2_x}^2)
-\int_{\Gamma}\kappa c_t^2\,dS,
\end{split}
\enn
where in the second inequality, we used the Gagliardo-Nirenberg inequality and \eqref{a36}.
Hence
\ben
\begin{split}
\frac{d}{dt}\|c_t\|_{L^2_x}^2+\|\na c_t\|_{L^2_x}^2+\|\sqrt{n}c_t\|_{L^2_x}^2+\int_{\Gamma}\kappa c_t^2\,dS
\leq C_0(\|u_t\|_{L^2_x}^2\|\na c\|_{H^1_x}^2+\|c_t\|_{L^2_x}^2+\|n_t\|_{L^2_x}^2).
\end{split}
\enn
Multiplying the above inequality with $(C_1+1)$ and adding the resulting inequality to \eqref{a28}, one gets
\ben
\begin{split}
&\frac{d}{dt}(\|n_t\|_{L^2_x}^2+\|c_t\|_{L^2_x}^2)+\|\na n_t\|_{L^2_x}^2+\|\na c_t\|_{L^2_x}^2+\|\sqrt{n}c_t\|_{L^2_x}^2+\int_{\Gamma}\kappa c_t^2\,dS\\
\leq &C_{T^*}(\|\na c\|_{H^1_x}^4+1)(\|n_t\|_{L^2_x}^2+\|c_t\|_{L^2_x}^2)+C_0\|u_t\|_{L^2_x}^2\|\na c\|_{H^1_x}^2,
\end{split}
\enn
which, along with Gronwall's inequality, Lemma \ref{l3}, Lemma \ref{l4} and Lemma \ref{l5}, gives rise to
\be\label{a29}
\|n_t\|_{L^\infty_{T^*}L^2_x}+\|c_t\|_{L^\infty_{T^*}L^2_x}+\|\na n_t\|_{L^2_{T^*}L^2_x}+\|\na c_t\|_{L^2_{T^*}L^2_x}\leq C_{T^*}.
\ee
It follows from the second equation of \eqref{cns}, \eqref{a29}, Lemma \ref{l3}, Lemma \ref{l4} and Lemma \ref{l5} that
\be\label{a30}
\begin{split}
&\|c\|_{L^2_{T^*}H^3_x}\\
\leq& \|c_t\|_{L^2_{T^*}H^1_x}+\|u\cdot\na c\|_{L^2_{T^*}H^1_x}+\|n c\|_{L^2_{T^*}H^1_x}\\
\leq &\|c_t\|_{L^2_{T^*}H^1_x}+ C_0(\|u\|_{L^\infty_{T^*}L^\infty_x}\|c\|_{L^2_{T^*}H^2_x}+\|\na u\|_{L^\infty_{T^*}L^4_x}\|\na c\|_{L^2_{T^*}L^4_x}+\|n\|_{L^\infty_{T^*}H^1_x} \|c\|_{L^2_{T^*}H^2_x})\\
\leq&C_{T^*}.
\end{split}
\ee
By a similar argument used above, one deduces that
\be\label{a31}
\|c\|_{L^\infty_{T^*}H^2_x}\leq \|c_t\|_{L^\infty_{T^*}L^2_x}+\|u\cdot\na c\|_{L^\infty_{T^*}L^2_x}+\|n c\|_{L^\infty_{T^*}L^2_x}\leq C_{T^*}.
\ee
The first equation of \eqref{cns} and the Gagliardo-Nirenberg inequality entail that
\ben
\begin{split}
\|n\|_{H^2_x}\leq& \|n_t\|_{L^2_x}+\|u\na n\|_{L^2_x}+\|n\na c\|_{H^1_x}\\
\leq & \|n_t\|_{L^2_x}+\|u\|_{L^\infty_x}\|\na n\|_{L^2_x}
+(\|n\|_{L^\infty_x}\|\na^2 c\|_{L^2_x}+\|\na n\|_{L^4_x}\|\na c\|_{L^4_x})\\
\leq &\|n_t\|_{L^2_x}+\|u\|_{L^\infty_x}\|\na n\|_{L^2_x}
+C_0(\|n\|_{L^{\infty}_x}\|c\|_{H^2_x}+\|n\|_{H^1_x}^{\frac{1}{2}}\|n\|_{H^2_x}^{\frac{1}{2}}
\|c\|_{H^1_x}^{\frac{1}{2}}\|c\|_{H^2_x}^{\frac{1}{2}})\\
\leq &\frac{1}{2}\|n\|_{H^2_x}+\|n_t\|_{L^2_x}+\|u\|_{L^\infty_x}\|\na n\|_{L^2_x}
+C_0(\|n\|_{L^\infty_x}\|c\|_{H^2_x}+\|n\|_{H^1_x}\|c\|_{H^1_x}\|c\|_{H^2_x})
\end{split}
\enn
for each $t\in (0,T^*]$, 
which, in conjunction with \eqref{a29}, \eqref{a31}, Lemma \ref{l3}, Lemma \ref{l4} and Lemma \ref{l5} leads to
\be\label{a32}
\begin{split}
\|n\|_{L^\infty_{T^*}H^2_x}
\leq& 2\|n_t\|_{L^\infty_{T^*}L^2_x}+2\|u\|_{L^\infty_{T^*}L^\infty_x}\|\na n\|_{L^\infty_{T^*}L^2_x}\\
&+C_0(\|n\|_{L^\infty_{T^*}L^\infty_x}\|c\|_{L^\infty_{T^*}H^2_x}+\|n\|_{L^\infty_{T^*}H^1_x}\|c\|_{L^\infty_{T^*}H^2_x}^2)\leq C_{T^*}.
\end{split}
\ee
In a similar fashion as above, we obtain
\ben
\|n\|_{L^2_{T^*}H^3_x}\leq \|n_t\|_{L^2_{T^*}H^1_x}+\|u\na n\|_{L^2_{T^*}H^1_x}+\|n\na c\|_{L^2_{T^*}H^2_x}\leq C_{T^*},
\enn
which, along with \eqref{a29}, \eqref{a30}, \eqref{a31} and \eqref{a32} completes the proof.

\end{proof}
\begin{lemma}\label{l7}
Suppose that the assumptions in Lemma \ref{l8} hold true. Then there exists a constant $C_{T^*}$ depending on $T^*$, such that
\ben
\|u\|_{L^\infty_{T^*}H^3_x}+\|\na p\|_{L^\infty_{T^*}H^2_x}+\|\na p\|_{L^2_{T^*}H^3_x}\leq C_{T^*}.
\enn
\end{lemma}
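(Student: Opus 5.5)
The plan is to estimate $u$ in $H^3$ through the vorticity $\omega=\na\times u$, using the transport equation \eqref{a26}. Since $u\cdot\nu=0$ on $\Gamma$ and $\na\cdot u=0$, the div-curl (Calderón--Zygmund type) estimate gives $\|u\|_{H^{k+1}_x}\leq C_0(\|\omega\|_{H^k_x}+\|u\|_{L^2_x})$. It therefore suffices to bound $\|\omega\|_{L^\infty_{T^*}H^2_x}$. The pressure will then be recovered from the elliptic Neumann problem
\[
\Delta p=-\na\cdot(u\cdot\na u)-\na\cdot(n\na\phi)\ \text{in}\ \Omega,\qquad \partial_\nu p=-(u\cdot\na u)\cdot\nu-n\na\phi\cdot\nu\ \text{on}\ \Gamma,
\]
where $-(u\cdot\na u)\cdot\nu=u\cdot(\na\nu)u$ because $u\cdot\nu=0$.

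\textbf{Step 1: Lipschitz bound on $u$.} The key difficulty, as usual for Euler, is controlling $\|\na u\|_{L^1_{T^*}L^\infty_x}$. We use the Brezis--Gallouet--Wainger type inequality in a bounded domain,
\[
\|\na u\|_{L^\infty_x}\leq C_0\bigl(1+\|\omega\|_{L^\infty_x}\bigr)\log\bigl(e+\|u\|_{H^3_x}\bigr)+C_0\|u\|_{L^2_x}.
\]
Along characteristics, \eqref{a26} gives
\[
\|\omega(t)\|_{L^\infty_x}\leq\|\omega_0\|_{L^\infty_x}+\int_0^t\|\na n\|_{L^\infty_x}\|\na\phi\|_{L^\infty_x}\,ds .
\]
By Lemma \ref{l6}, $n\in L^2_{T^*}H^3_x\hookrightarrow L^2_{T^*}W^{1,\infty}_x$, so $\|\omega\|_{L^\infty_{T^*}L^\infty_x}\leq C_{T^*}$.

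\textbf{Step 2: $H^2$ energy estimate for $\omega$.} Apply $\partial^\alpha$ with $|\alpha|\leq 2$ to \eqref{a26} and test with $\partial^\alpha\omega$. Because $u\cdot\nu=0$, the transport term $\int u\cdot\na\partial^\alpha\omega\,\partial^\alpha\omega\,dx$ vanishes, so no boundary terms arise for $\omega$ itself. The commutators are bounded by $C_0(\|\na u\|_{L^\infty_x}\|\omega\|_{H^2_x}+\|\na^2u\|_{L^4_x}\|\na\omega\|_{L^4_x})$, and the latter is also $\lesssim\|\na u\|_{L^\infty_x}\|u\|_{H^3_x}$ by standard Kato--Ponce/Gagliardo--Nirenberg estimates. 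The forcing contributes $C_0\|n\|_{H^3_x}\|\na\phi\|_{W^{3,\infty}_x}\|\omega\|_{H^2_x}$. Setting $Y=e+\|u\|_{H^3_x}^2$ and using Step 1 yields
\[
Y'\leq C_{T^*}\bigl(1+\log Y\bigr)Y+C_0\|n\|_{H^3_x}^2 .
\]
The log-Gronwall inequality, combined with $n\in L^2_{T^*}H^3_x$ from Lemma \ref{l6}, gives $\|u\|_{L^\infty_{T^*}H^3_x}\leq C_{T^*}$. I expect this step to be the main obstacle, in particular verifying the BGW inequality with the boundary condition. If needed, one can instead work with $\|\na u\|_{L^\infty}$ bounded via $\|\omega\|_{L^\infty}$ and $\|\omega\|_{W^{1,4}}$, using the $L^4$ bound from Lemma \ref{l5}.

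\textbf{Step 3: pressure.} Elliptic regularity for the Neumann problem gives
\[
\|\na p\|_{H^k_x}\leq C_0\bigl(\|u\cdot\na u\|_{H^k_x}+\|n\na\phi\|_{H^k_x}\bigr).
\]
For $k=2$, the $H^3$ bound on $u$ and $n\in L^\infty_{T^*}H^2_x$ give $\|\na p\|_{L^\infty_{T^*}H^2_x}\leq C_{T^*}$. For $k=3$, the term $u\cdot\na u$ in $H^3$ needs $u\in H^4$, which is not available. So instead write the source as $\na\cdot(u\cdot\na u)=\na u:\na u^{T}$, which lies in $H^2$, and use the compatible Neumann data $u\cdot(\na\nu)u\in H^{5/2}(\Gamma)$; this gives $\na p\in H^3_x$ with norm controlled by $\|u\|_{H^3_x}^2+\|n\|_{H^3_x}$. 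Squaring and integrating in time, together with $n\in L^2_{T^*}H^3_x$, yields $\|\na p\|_{L^2_{T^*}H^3_x}\leq C_{T^*}$.
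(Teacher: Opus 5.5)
Your proposal is correct, and its skeleton matches the paper's. Both arguments use:
\begin{itemize}
\item the bound on $\|\omega\|_{L^\infty_{T^*}L^\infty_x}$ along particle trajectories from $n\in L^2_{T^*}H^3_x$ (exactly \eqref{a34});
\item a Brezis--Gallouet--Wainger inequality;
\item a logarithmic Gronwall argument;
\item the Neumann problem for $p$.
\end{itemize}

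The genuine difference is where you do the top-order energy estimate. The paper applies $D^\alpha_x$, $|\alpha|\le 3$, to the velocity equation. Since $D^\alpha_x u\cdot\nu\neq 0$ on $\Gamma$ in general, the pressure term $K_2$ survives. It has to be controlled by the tame bound \eqref{a35} on $\|\na p\|_{H^3_x}$, an $H^4$-level elliptic estimate for $p$ imported from Temam and Ferrari.

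You instead estimate $\omega$ in $H^2_x$ from \eqref{a26}. That equation contains no pressure, and its transport term vanishes using only $\na\cdot u=0$ and $u\cdot\nu=0$. You then recover $u$ in $H^3_x$ by the div--curl estimate, so the pressure enters only a posteriori in Step 3. There your treatment is essentially the paper's \eqref{a44} shifted up one order: the source is $\na u:\na u^{T}\in H^2_x$ and the Neumann datum is $u\cdot(\na\nu)u-n\na\phi\cdot\nu\in H^{5/2}(\Gamma)$.

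What you pay is twofold. First, you need the div--curl estimate; its $\|u\|_{L^2_x}$ term, bounded by Lemma \ref{l8}, handles harmonic fields if $\Om$ is not simply connected. Second, you need a commutator bound at the level of $\omega$, which your explicit splitting plus Gagliardo--Nirenberg handles correctly. What you gain is avoiding the $H^4$-level pressure estimate inside the energy inequality.

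Two minor points:
\begin{itemize}
\item The quantity you actually differentiate should be $Y=e+\|u\|_{L^2_x}^2+\|\omega\|_{H^2_x}^2$. This is equivalent to $e+\|u\|_{H^3_x}^2$ by div--curl, with the $L^2$ part controlled by \eqref{a8}.
\item Your fallback via $\|\omega\|_{W^{1,4}_x}$ is unnecessary. Lemma \ref{l5} only controls $\|\omega\|_{L^4_x}$, not $\na\omega$, so that route would itself need a logarithmic inequality to close.
\end{itemize}
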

\begin{proof}
Let $X(\cdot,t):\Omega\rightarrow \Omega$ be the particle-trajectory map of the fluid (see e.g. \cite{majda-bertozzi2002}), which is defined by
\ben
\left\{
\begin{array}{lll}
\frac{d}{dt}X(y,t)=u(X(y,t),t),\qquad y\in\Omega,\ \ t\in(0,T^*],\\
X(y,0)=y,\qquad y\in\Omega.
\end{array}
\right.
\enn
Then it follows from \eqref{a26} that
\ben
\omega(X(y,t),t)=\omega_0(y,0)+\int_0^t
(\na\times n(X(y,s),s)) \cdot(\na^{\perp} \phi(X(y,s)))\,ds
\enn
for each $t\in (0,T^*]$, and therefore, the Sobolev embedding inequality and Lemma \ref{l6} imply that 
\be\label{a34}
\begin{split}
\|\omega\|_{L^\infty_{T^*}L^\infty_x}\leq& \|\omega_0\|_{L^\infty_x}
+\|\na \phi\|_{L^\infty_x}\int_0^{T^*}\|\na n(\cdot,s)\|_{L^\infty_x}\,ds\\
\leq& C_0\|u_0\|_{H^3_x}
+C_0\|\na \phi\|_{L^\infty_x}(T^*)^{1/2}\|n\|_{L^2_{T^*}H^3_x}
\leq C_{T^*}.
\end{split}
\ee
Applying $D^{\alpha}_x$ with $0\leq |\alpha|\leq 3$ to the third equation of \eqref{cns} and taking the $L^2_x$ inner product of the resulting equation with $D^{\alpha}_x u$ and using the fact $\na\cdot u=0$ to have
\be\label{a33}
\begin{split}
\frac{1}{2}\frac{d}{dt}\|D^{\alpha}_x u\|_{L^2_x}^2
=&-\int_{\Omega}[D^{\alpha}_x(u\cdot \na u)-u\cdot (D^{\alpha}_x\na u)]\cdot (D^{\alpha}_xu)\,dx\\
&-\int_{\Omega}[D^{\alpha}_x\na p\!+\!D^{\alpha}_x(n\na \phi)]\cdot(D^{\alpha}_x u)\,dx\\
=:&\,K_1+K_2.
\end{split}
\ee
Recalling the following commutator estimate (see e.g. \cite[Lemma 1]{ferrari1993}):
\ben
\|D^{\alpha}_x(fg)-fD^{\alpha}_x g\|_{L^2_x}\leq C_0(\|f\|_{H^m_x}\|g\|_{L^\infty_x}+\|f\|_{W^{1,\infty}_x}\|g\|_{H^{m-1}_x})
\enn
for $0\leq |\alpha|\leq m$ with $m\geq 3$ and $f\in H^{m}(\Omega)\cap C^{1}(\Omega)$ and $g\in H^{m-1}(\Omega)\cap C(\Omega)$, one gets
\ben
\begin{split}
K_1\leq& \|D^{\alpha}_x(u\cdot\na u)-u\cdot (D^{\alpha}_x\na u)\|_{L^2_x}\|D^{\alpha}_xu\|_{L^2_x}\\
\leq& C_0(\|u\|_{H^3_x}\|\na u\|_{L^\infty_x}+\|u\|_{W^{1,\infty}_x}\|\na u\|_{H^{2}_x})\|u\|_{H^3_x}\\
\leq& C_0\|u\|_{W^{1,\infty}_x}\|u\|_{H^3_x}^2.
\end{split}
\enn
By the classical estimates on $\na p$ involving $u$ from the Euler equation (see e.g. \cite[Lemma 1.2]{temam1975}, \cite[Lemma 2]{ferrari1993}), one deduces that
\be\label{a35}
\|\na p\|_{H^3_x}\leq C_0(\|u\|_{W^{1,\infty}_x}\|u\|_{H^3_x}+\|n\na\phi\|_{H^3_x})\leq C_0(\|u\|_{W^{1,\infty}_x}\|u\|_{H^3_x}+\|n\|_{H^3_x}).
\ee
Thus
\ben
\begin{split}
K_2\leq (\|\na p\|_{H^3_x}+\|n\na\phi\|_{H^3_x})\|u\|_{H^3_x}\leq C_0(1+\|u\|_{W^{1,\infty}_x})\|u\|_{H^3_x}^2+C_0\|n\|_{H^3_x}^2.
\end{split}
\enn
Inserting the above estimates for $K_1$ and $K_2$ into \eqref{a33} and summing the resulting equations with respect to $0\leq |\alpha|\leq 3$ to get
\ben
\frac{d}{dt}\|u\|_{H^3_x}^2\leq C_0(1+\|u\|_{W^{1,\infty}_x})\|u\|_{H^3_x}^2+C_0\|n\|_{H^3_x}^2,
\enn
which, in conjunction with the following Brezis-Gallouet-Wainger type inequality (see e.g. \cite[Proposition 1]{ferrari1993}):
\ben
\|u\|_{W^{1,\infty}_x}\leq C_0[(1+\log^{+}\|u\|_{H^3_x})\|\omega\|_{L^\infty_x}+1]
\enn
and \eqref{a34} gives rise to
\ben
\begin{split}
\frac{d}{dt}\|u\|_{H^3_x}^2\leq C_{T^*}(1+\log^{+}\|u\|_{H^3_x})\|u\|_{H^3_x}^2+C_0\|n\|_{H^3_x}^2.
\end{split}
\enn
Applying the Gronwall's inequality to the above inequality and using Lemma \ref{l6}, we obtain 
\be\label{a40}
\|u\|_{L^\infty_{T^*}H^3_x}\leq C_{T^*},
\ee
 which, along with the Sobolev embedding inequality, \eqref{a35} and Lemma \ref{l6} yields
\be\label{a41}
\begin{split}
\|\na p\|_{L^2_{T^*}H^3_x}\leq& C_0(T^*)^{\frac{1}{2}}\|u\|_{L^\infty_{T^*}W^{1,\infty}_x}\|u\|_{L^\infty_{T^*}H^3_x}+C_0\|n\|_{L^2_{T^*}H^3_x}\\
\leq & C_0(T^*)^{\frac{1}{2}}\|u\|_{L^\infty_{T^*}H^3_x}^2+C_0\|n\|_{L^2_{T^*}H^3_x}\\
\leq &C_{T^*}.
\end{split}
\ee
Applying $\na\cdot$ to the third equation of \eqref{cns} and using the fact $\nabla\cdot u=0$ and the third boundary condition in \eqref{bc}, one gets
\ben
\left\{
\begin{array}{lll}
\Delta p=-\sum\limits_{i,j=1}^2\partial_i u_j\partial_j u_i-\na\cdot(n\na \phi),\quad   x\in \Om,\\
\frac{\partial p}{\partial \nu}=\sum\limits_{i,j=1}^2u_iu_j \sigma_{ij}-n\na\phi\cdot\nu,\quad x\in \Gamma
\end{array}
\right.
\enn
 for fixed $t\in (0,T^*]$, where $\sigma_{ij}\in H^\infty(\Gamma)$ denotes the normalized Hessian of the local boundary function $\varphi$, i.e. $\sigma_{ij}=\frac{\partial_i\partial_j\varphi}{|\na \varphi|}$,
 representing the curvature contribution of $\Gamma$ (cf. \cite[Lemma 1.1]{temam1975}). Employing the standard theory on elliptic systems (see e.g. \cite[Theorem 5.4]{lions&magenes}) and the trace theorem, we deduce from the above system that
 \be\label{a44}
 \begin{split}
 \|\na p\|_{H^2(\Omega)}\leq & C_0[\sum\limits_{i,j=1}^2\|\partial_i u_j\partial_j u_i\|_{H^1(\Omega)}+\|n\na \phi\|_{H^2(\Omega)}]\\
 &+C_0[\sum\limits_{i,j=1}^2\|u_iu_j\|_{H^{3/2}(\Gamma)}+\|n\na \phi\|_{H^{3/2}(\Gamma)}]\\
 \leq &C_0\|u\|_{H^3(\Omega)}^2+C_0\|n\|_{H^2(\Omega)}
 \end{split}
 \ee
 for each $t\in (0,T^*]$, which, in conjunction with \eqref{a40} and Lemma \ref{l6} leads to
 \be\label{a42}
 \|\na p\|_{L^\infty_{T^*}H^2_x}\leq C_0 \|u\|_{L^\infty_{T^*}H^3_x}^2+C_0\|n\|_{L^\infty_{T^*}H^2_x}\leq C_{T^*}.
 \ee
 Collecting \eqref{a40}, \eqref{a41} and \eqref{a42}, one gets the desired estimates. The proof is completed.
 
\end{proof}

With Lemma \ref{l8}- Lemma \ref{l7} in hand, we are now in a position to prove Theorem \ref{t1}.
~\\
\textbf{Proof of Theorem \ref{t1}.} Under the assumption $T^*<\infty$, we next justify \eqref{a00} by the argument of contradiction. Indeed, if \ben
\|n\|_{L^q(0,T^*;L^p(\Omega))}<\infty,\qquad \frac{2}{p}+\frac{2}{q}\leq 1
\enn
for some $2<p\leq \infty$. Then it follows from Lemma \ref{l8}-Lemma \ref{l7} that
\be\label{b12}
\begin{split}
(n,c,u,\na p)\in C([0,T^{*}];H^2_x\times H^2_x\times H^3_x\times H^2_x)
\end{split}
\ee
with
\ben
\|n\|_{L^\infty_{T^*}H^2_x}+\|c\|_{L^\infty_{T^*}H^2_x}+\|u\|_{L^\infty_{T^*}H^3_x}+\|\na p\|_{L^\infty_{T^*}H^2_x}\leq C_{T^*},
\enn
for some constant $C_{T^*}<\infty$ depending on $T^*$. In particular, 
\ben
\|n(\cdot,T^*)\|_{H^2_x}+\|c(\cdot,T^*)\|_{H^2_x}+\|u(\cdot,T^*)\|_{H^3_x}\leq C_{T^*}<\infty,
\enn
from which, and Proposition \ref{p1}  we conclude that there is a $\delta>0$ depending on $\|n(\cdot,T^*)\|_{H^2_x}$, $\|c(\cdot,T^*)\|_{H^2_x}$ and $\|u(\cdot,T^*)\|_{H^3_x}$, such that
\ben
(n,c,u,\na p)\in C([T^*,T^{*}+\delta];H^2_x\times H^2_x\times H^3_x\times H^2_x),
\enn
which, in conjunction with \eqref{b12} leads to
\ben
(n,c,u,\na p)\in C([0,T^{*}+\delta];H^2_x\times H^2_x\times H^3_x\times H^2_x).
\enn
This contradicts with the definition of $T^*$ and thus \eqref{a00} holds true. The proof is finished.

\endProof

\section{Proof of Theorem \ref{t2}}
 We argue by contradiction. We suppose that $T^*$, the maximal time of existence in Proposition \ref{p1} is finite. We shall show that $\|n\|_{L^4_{T^*}L^4_x}<\infty$, which contradicts Theorem \ref{t1}.
First, it follows from the first inequality in \eqref{a36} and \eqref{a0} that
\be\label{a2}
\|c\|_{L^\infty_{T^*}L^\infty_x}\leq \max\{\|\gamma\|_{L^\infty(\Gamma)}, \|c_0\|_{L^\infty}\}\leq \frac{1}{48}.
\ee
Let 
\ben
\theta(c)=e^{12c^2}.
\enn
Then a direct calculation and \eqref{a2} lead to
\be\label{a4}
\theta^{'}\geq 0,\qquad \frac{5(\theta^{'})^2}{\theta}+\theta+\theta^{'}\leq \frac{\theta^{''}}{4},
\ee
thanks to the fact that $c(x,t)\geq 0$ in $\Omega\times [0,T^*]$. 
Multiplying the first equation of \eqref{cns} by $\theta(c) n$ in $L^2_x$ and using the second equation in \eqref{cns} and integration by parts to have
\be\label{a1}
\begin{split} 
\frac{1}{2}\frac{d}{dt}\int_{\Omega}\theta n^2dx
=&\int_{\Omega}\theta n[-u\cdot \na n+\Delta n-\na\cdot (n\na c)]\,dx\\
&+\frac{1}{2}\int_{\Omega}\theta^{'}n^2(-u\cdot \na c+\Delta c-nc)dx\\
=
&-\int_{\Omega}\theta |\na n|^2dx
-\frac{1}{2}\int_{\Omega}\theta^{''}n^2|\na c|^2dx
+\int_{\Omega}\theta n\na n\cdot\na c\, dx\\
&-2\int_{\Omega}\theta^{'} n\na n\cdot\na c \,dx
+\int_{\Omega}\theta^{'} n^2 |\na c|^2 dx
+\frac{1}{2}\int_{\Gamma}\theta^{'}n^2\na c\cdot \nu dS \\
&-\frac{1}{2}\int_{\Omega}\theta^{'}n^3 c \,dx
\end{split}
\ee
for each $t\in (0,T^*]$, where we used 
\ben
\int_{\Omega} \theta n u\cdot \na n \,dx+\frac{1}{2}\int_{\Omega} \theta^{'} n^2 u\cdot \na c\,dx
=\frac{1}{2}\int_{\Omega} \theta u\cdot \na n^2\,dx +\frac{1}{2}\int_{\Omega} n^2\na\cdot (\theta u)\,dx
=0,
\enn
thanks to the facts $\na\cdot u=0$ in $\Omega$ and $u\cdot\nu=0$ on $\Gamma$. The Cauchy-Schwarz inequality leads to
\be\label{a5}
\begin{split}
\int_{\Omega}\theta n\na n\cdot\na c\, dx
-2\int_{\Omega}\theta^{'} n\na n\cdot\na c \,dx
\leq \frac{1}{2}\int_{\Omega}\theta |\na n|^2dx
+\int_{\Omega}\big(\theta+\frac{4(\theta^{'})^2}{\theta}\big) n^2|\na c|^2dx.
\end{split}
\ee
It follows from the first boundary condition in \eqref{bc}, the trace theorem and \eqref{a2} that
\be\label{a6}
\begin{split}
\frac{1}{2}\int_{\Gamma}\theta^{'}n^2\na c\cdot \nu dS
=&\frac{1}{2}\int_{\Gamma}\theta^{'}n^2\kappa(x)(\gamma(x)-c)\,dS\\
\leq& \|\kappa(x)\gamma(x)\|_{L^\infty(\Gamma)}\big\|\frac{\theta^{'}}{\theta}\big\|_{L^\infty(\Gamma)}\|n\sqrt{\theta}\|_{L^2(\Gamma)}^2\\
\leq& C_0\|\kappa\|_{H^1(\Gamma)}\|\gamma\|_{H^1(\Gamma)}\|c\|_{L^\infty_x}\|n\sqrt{\theta}\|_{L^2_x}\|\na (n\sqrt{\theta})\|_{L^2_x}\\
\leq &\frac{1}{4}\|\sqrt{\theta}\na n\|_{L^2_x}^2+\int_{\Omega}\frac{(\theta^{'})^2}{\theta} n^2|\na c|^2dx+C_0\|n\sqrt{\theta}\|_{L^2_x}^2,
\end{split}
\ee
where in the second inequality we have used the fact that
\ben
\big\|\frac{\theta^{'}}{\theta}\big\|_{L^\infty_TL^\infty_x}=\|2\cdot 12^2 c\|_{L^\infty_TL^\infty_x}\leq C_0,
\enn
due to \eqref{a2}. Substituting \eqref{a5} and \eqref{a6} into \eqref{a1} and using \eqref{a4}, one gets
\ben
\frac{d}{dt}\int_{\Omega}\theta n^2dx+\frac{1}{2}\int_{\Omega}\theta |\na n|^2dx
+\frac{1}{2}\int_{\Omega}\theta^{''}n^2|\na c|^2dx\leq C_0\|n\sqrt{\theta}\|_{L^2_x}^2, 
\enn
which, along with Gronwall's inequality and the fact $\theta\geq 1$ entails that
\be\label{a3}
\|n\|_{L^\infty_{T^*}L^2_x}+\|\na n\|_{L^2_{T^*}L^2_x}\leq C_{T^*},
\ee
where the constant $C_{T^*}$ depends on $T^*$. Then the Gagliardo-Nirenberg inequality and \eqref{a3} yield
\ben
\|n\|_{L^4_{T^*}L^4_x}\leq C_0\|n\|_{L^\infty_{T^*}L^2_x}^{\frac{1}{2}}\|n\|_{L^2_{T^*}H^1_x}^{\frac{1}{2}}\leq C_{T^*}<\infty,
\enn
which, contradicts with Theorem \ref{t1} and thus $T^*=\infty$. 
The proof is completed.

\endProof

\section{Appendix}
In this section, we give a sketch proof of Proposition \ref{p1}. Enlightened by \cite{braukhoff2017} we first transform \eqref{cns}-\eqref{bc} into a  system with homogeneous boundary conditions. Then we employ the Banach's fixed-point theorem to prove local existence of solutions to this transformed system and finally we pass these results to the pre-transformed system and derive Proposition \ref{p1}.
~\\  
\textbf{The transformed system and its local well-posedness}. 
Let
\be\label{b10}
\tilde{c}(x,t)=e^{g_1(x)}(g_2(x)-c(x,t)),\qquad \tilde{n}(x,t)=n(x,t)e^{-c(x,t)},
\ee
where $g_1, g_2\in H^3(\Omega)$ satisfy \eqref{b01} and \eqref{a15}.
Then
\be\label{b2}
\begin{split}
&n_t+u\cdot\na n=e^c(\tilde{n}_t+u\cdot\na\tilde{n})+e^c\tilde{n}(c_t+u\cdot\na c)
=e^c(\tilde{n}_t+u\cdot\na\tilde{n})+e^c\tilde{n}(\Delta c-nc),\\
&\Delta n-\nabla\cdot(n\nabla c)=\Delta(e^c\tilde n )-\nabla\cdot(e^c\tilde{n}\nabla c)=e^c(\Delta \tilde{n}+\nabla\tilde{n}\cdot\nabla c),\\
&\nabla c=\nabla g_2+\tilde{c}e^{-g_1}\nabla g_1-e^{-g_1}\nabla\tilde{c},\\
&\Delta c=\Delta g_2+e^{-g_1}(\tilde{c}\Delta g_1-\tilde{c}|\nabla g_1|^2+2\nabla g_1\cdot\nabla \tilde{c}-\Delta\tilde{c}).
\end{split}
\ee
Substituting \eqref{b2} into \eqref{cns}-\eqref{bc}, one gets
\be\label{e01}
\left\{
\begin{array}{lll}
\tilde{n}_t+u\cdot\nabla \tilde{n}=\Delta \tilde{n}+e^{-g_1}(\tilde{n}\Delta \tilde{c}-\nabla\tilde{n}\cdot\nabla\tilde{c})\\
\qquad\qquad\ \ \ \ \ +(\nabla g_2+e^{-g_1}\tilde{c}\nabla g_1)\cdot\nabla\tilde{n}+\tilde{n}e^{-g_1}F,\quad\qquad   x\in \Om,\ t>0,\\
\tilde{c}_t+u\cdot\na \tilde{c}=\Delta \tilde{c}+u\cdot\nabla g_1\tilde{c}+e^{g_1}u\cdot\nabla g_2+F,\quad \quad\qquad\quad   x\in \Om,\ t>0,\\
u_t+u\cdot\nabla u+\nabla p=-\tilde{n}e^{g_2}e^{-\tilde{c}e^{-g_1}}\nabla\phi,\quad \quad\qquad \quad\qquad\ \    \,x\in \Om,\ t>0,\\
\na\cdot u=0,\quad  \quad \quad\qquad \quad\qquad\quad \quad\qquad \quad\qquad \qquad \quad \ \ x\in \Om,\ t>0,\\
\tilde{n}(x,0)=\tilde{n}_0:=n_0(x)e^{-c_0(x)},\,\,u(x,0)=u_0(x),\quad \qquad \qquad \quad \quad x\in \Omega,\\
\tilde{c}(x,0)=\tilde{c}_0:=e^{g_1(x)}(g_2(x)-c_0(x)),\quad \qquad \qquad \quad \qquad \quad \quad \ x\in \Omega,\\
\frac{\partial \tilde{n}}{\partial \nu}=\frac{\partial \tilde{c}}{\partial \nu}=u\cdot\nu=0,\quad\quad \qquad \qquad \quad \qquad 
\quad \qquad \qquad x\in \Gamma,\ \ t>0
\end{array}
\right.
\ee
with
\ben
\begin{split}
F:=&-2\nabla g_1\cdot\nabla\tilde{c}-e^{g_1}\Delta g_2+(|\na g_1|^2-\Delta g_1) \tilde{c}+e^{g_2-\tilde{c}e^{-g_1}}\tilde{n}[e^{g_1}g_2-\tilde{c}].
\end{split}
\enn

We next prove the local well-posedness of \eqref{e01} by an application of the Banach's fixed-point theorem. For $T>0$, denote
\ben
\begin{split}
X_{T}=\{(\tilde{n},\tilde{c},\tilde{u})\in L^1(\Omega\times(0,T)):\,\,&\tilde{n},\tilde{c}\in L^2(0,T;H^3),\tilde{n}_t,\tilde{c}_t\in L^2(0,T;H^1),u\in L^\infty(0,T;H^3),\\
&\nabla\cdot u=0\,\,\text{for}\,\,(x,t)\in\Omega\times(0,T],\,\,u\cdot\nu=0\,\,\text{for}\, \,(x,t)\in \Gamma\times(0,T]\}
\end{split}
\enn
and  
\ben
\|(\tilde{n},\tilde{c},u)\|_{X_{T}}=\|\tilde{n}\|_{L^2_TH^3_x}+\|\tilde{c}\|_{L^2_TH^3_x}
+\|\tilde{n}_t\|_{L^2_TH^1_x}
+\|\tilde{c}_t\|_{L^2_TH^1_x}+\|u\|_{L^\infty_TH^3_x}.
\enn
We define the mapping $\Phi:=(\Phi_1,\Phi_2,\Phi_3): X_T\rightarrow X_T$ as solution of the following linear problem
\be\label{e00}
\left\{
\begin{array}{lll}
\partial_t\Phi_1-\Delta \Phi_1=e^{-g_1}(\tilde{n}\Delta \Phi_2-\nabla\tilde{n}\cdot\nabla\Phi_2)-u\cdot\nabla \tilde{n}\\
\qquad\qquad\qquad+(\nabla g_2+e^{-g_1}\tilde{c}\nabla g_1)\cdot\nabla\tilde{n}+\tilde{n}e^{-g_1}F,\qquad   x\in \Om,\ t>0,\\
\partial_t\Phi_2-\Delta \Phi_2=-u\cdot\na \tilde{c}+u\cdot\nabla g_1\tilde{c}+e^{g_1}u\cdot\nabla g_2+F,\qquad   x\in \Om,\ t>0,\\
\partial_t\Phi_3+u\cdot\nabla \Phi_3+\nabla P=-\tilde{n}e^{g_2}e^{-\tilde{c}e^{-g_1}}\nabla\phi,\qquad   \qquad \quad \ \ x\in \Om,\ t>0,\\
\na\cdot \Phi_3=0,\quad  \qquad   \qquad\qquad   \qquad   \qquad\qquad\qquad\quad\quad \ \ x\in \Om,\ t>0,\\
(\Phi_1,\Phi_2,\Phi_3)(x,0)=(\tilde{c}_0,\tilde{n}_0,u_0)(x),\quad \qquad\qquad\qquad\quad\quad\quad\ \  x\in \Omega,\\
\frac{\partial \Phi_1}{\partial \nu}=\frac{\partial \Phi_2}{\partial \nu}=\Phi_3\cdot\nu=0,\quad\qquad\qquad\qquad\qquad\quad\quad\quad x\in \Gamma,\ \ t>0,
\end{array}
\right.
\ee
where $\nabla P$ is the pressure associated with $\Phi_3$. For given $(\tilde{n},\tilde{c},u)\in X_T$, the solvability of \eqref{e00} in $X_T$ follows from the well-posedness theory on linear parabolic systems and Euler equations, we refer the reader to \cite[Section 7.1]{evans} and \cite{temam1975}. We next show that the mapping $\Phi: X_T\rightarrow X_T$ is bounded. 
First it follows from the Sobolev embedding inequality and \eqref{a15} that
\be\label{b3}
\begin{split}
&\|F\|_{L^2_TH^1_x}\\
\leq& C_0T^{\frac{1}{2}}[\|g_1\|_{H^3_x}\|\tilde{c}\|_{L^\infty_TH^2_x}+(\|g_1\|_{H^2_x}+1)\|g_2\|_{H^3_x}\,e^{C_0\|g_1\|_{L^\infty_x}}+(\|g_1\|_{H^3_x}^2+\|g_1\|_{H^3_x})
\|\tilde{c}\|_{L^\infty_T H^2_x}]\\
&+C_0 T^{\frac{1}{2}}[e^{\|g_2\|_{L^\infty_x}}\exp\{\|\tilde{c}\|_{L^\infty_TL^\infty_x}\exp\{\|g_1\|_{L^\infty_x}\}\}(e^{\|g_1\|_{L^\infty_x}}\|g_2\|_{H^2_x}+\|\tilde{c}\|_{L^\infty_TH^2_x}^2+1)
\|\tilde{n}\|_{L^\infty_TH^2_x}]\\
\leq &C_0T^{\frac{1}{2}}(\|\tilde{c}\|_{L^\infty_TH^2_x}+1)
+C_0T^{\frac{1}{2}}e^{C_0\|\tilde{c}\|_{L^\infty_TH^2_x}}(\|\tilde{c}\|_{L^\infty_TH^2_x}^2+1)\|\tilde{n}\|_{L^\infty_TH^2_x},
\end{split}
\ee
where the constant $C_0$ depends only on $\Omega$ and the boundary data. Recall the following Sobolev embedding inequality involving time (see e.g. \cite[Section 5.9.2]{evans}):
\be\label{b4}
\|f\|_{L^\infty_TH^2_x}\leq C_0\|f(\cdot,0)\|_{H^2_x}+C_0\|f\|_{L^2_TH^3_x}^{\frac{1}{2}}\|f_t\|_{L^2_TH^1_x}^{\frac{1}{2}}
\ee
for each $f$ satisfying $f\in L^2(0,T;H^3),\, f_t\in L^2(0,T;H^1)$, where the constant $C_0$ depends only on $\Omega$. Then we derive from \eqref{b3} and \eqref{b4} that
\be\label{b5}
\|F\|_{L^2_TH^1_x}\leq C_0T^{\frac{1}{2}}(1+\|(\tilde{n},\tilde{c},u)\|_{X_T}^3)(1+e^{C_0(1+\|(\tilde{n},\tilde{c},u)\|_{X_T})}),
\ee
where the constant $C_0$ depends on $\Omega$ and the initial and boundary data. The maximal Sobolev regularity on heat semigroup subject to Neumann boundary conditions (see e.g. \cite[3.1 Theorem]{matthias-jan1997}), the Sobolev embedding inequality, \eqref{a15}, \eqref{b4} and \eqref{b5} lead to 
\be\label{b6}
\begin{split}
&\|\partial_t\Phi_2\|_{L^2_TH^1_x}+\|\Phi_2\|_{L^2_TH^3_x}\\
\leq &\|u\cdot\na \tilde{c}\|_{L^2_TH^1_x}+\|u\cdot\nabla g_1\tilde{c}\|_{L^2_TH^1_x}+\|e^{g_1}u\cdot\nabla g_2\|_{L^2_TH^1_x}+\|F\|_{L^2_TH^1_x}\\
\leq &C_0T^{\frac{1}{2}}[(\|g_1\|_{H^2_x}+1)\|u\|_{L^\infty_TH^2_x}\|\tilde{c}\|_{L^\infty_TH^2_x}
+e^{\|g_1\|_{L^\infty_x}}(\|g_1\|_{H^2_x}+1)\|g_2\|_{H^2_x}\|u\|_{L^\infty_TH^2_x}]+\|F\|_{L^2_TH^1_x}\\
\leq &C_0T^{\frac{1}{2}}[(\|g_1\|_{H^2_x}+1)\|u\|_{L^\infty_TH^2_x}\|\tilde{c}\|_{L^\infty_TH^2_x}
+e^{C_0\|g_1\|_{H^2_x}}(\|g_1\|_{H^2_x}+1)\|g_2\|_{H^2_x}\|u\|_{L^\infty_TH^2_x}]+\|F\|_{L^2_TH^1_x}\\
\leq& C_0T^{\frac{1}{2}}(1+\|(\tilde{n},\tilde{c},u)\|_{X_T}^3)(1+e^{C_0(1+\|(\tilde{n},\tilde{c},u)\|_{X_T})}).
\end{split}
\ee
A similar argument used in deriving \eqref{b6}, along with \eqref{b5} and \eqref{b6} entails that
\be\label{b7}
\begin{split}
&\|\partial_t\Phi_1\|_{L^2_TH^1_x}+\|\Phi_1\|_{L^2_TH^3_x}\\
\leq &\|u\cdot\nabla \tilde{n}\|_{L^2_TH^1_x}+\|(\nabla g_2+e^{-g_1}\tilde{c}\nabla g_1)\cdot\nabla\tilde{n}\|_{L^2_TH^1_x}\\
&+\|e^{-g_1}(\tilde{n}\Delta \Phi_2-\nabla\tilde{n}\cdot\nabla\Phi_2)\|_{L^2_TH^1_x}+\|\tilde{n}e^{-g_1}F\|_{L^2_TH^1_x}\\
\leq &C_0T^{\frac{1}{2}}(\|u\|_{L^\infty_TH^2_x}\|\tilde{n}\|_{L^\infty_TH^2_x}+\|g_2\|_{H^3_x}\|\tilde{n}\|_{L^\infty_TH^2_x})\\
&+C_0T^{\frac{1}{2}}e^{\|g_1\|_{L^\infty_x}}(\|g_1\|_{H^3_x}+1)\|g_1\|_{H^3_x}\|\tilde{c}\|_{L^\infty_TH^2_x}\|\tilde{n}\|_{L^\infty_TH^2_x}\\
&+e^{\|g_1\|_{L^\infty_x}}(\|g_1\|_{H^3_x}+1)\|\tilde{n}\|_{L^\infty_TH^2_x}(\|\Phi_2\|_{L^2_TH^3_x}+\|F\|_{L^2_TH^1_x})\\
\leq& C_0T^{\frac{1}{2}}(1+\|(\tilde{n},\tilde{c},u)\|_{X_T}^4)(1+e^{C_0(1+\|(\tilde{n},\tilde{c},u)\|_{X_T})}).
\end{split}
\ee
We proceed to estimating $\Phi_3$. For $0\leq|\alpha|\leq 3$, applying $D_x^\alpha$ to the third equation of \eqref{e00} and taking the $L^2_x$ inner product of the resulting results with $D_x^\alpha \Phi_3$ and using the facts $\na\cdot u=0$ in $\Omega\times(0,T]$ and $u\cdot\nu=0$ on $\Gamma\times(0,T]$, we have
\be\label{b8}
\begin{split}
\frac{1}{2}\frac{d}{dt}\|D^\alpha_x\Phi_3\|_{L^2_x}^2=&-\int_{\Omega}[D^{\alpha}_x(u\cdot \na \Phi_3)-u\cdot (D^{\alpha}_x\na \Phi_3)]\cdot D^{\alpha}_x\Phi_3\,dx\\
&-\int_{\Omega}[D^{\alpha}_x\na P+D^{\alpha}_x(\tilde{n}e^{g_2}e^{-\tilde{c}e^{-g_1}}\na \phi)]\cdot(D^{\alpha}_x \Phi_3)\,dx\\
=:&\,L_1+L_2.
\end{split}
\ee
Similarly to the derivation of \eqref{a44}, one gets
\ben
 \begin{split}
 \|\na P\|_{H^3_x}\leq & C_0[\sum\limits_{i,j=1}^2\|\partial_i u_j\partial_j \Phi_{3i}\|_{H^2_x}+\|\tilde{n}e^{g_2}e^{-\tilde{c}e^{-g_1}}\nabla\phi\|_{H^3_x}]\\
 &+C_0[\sum\limits_{i,j=1}^2\|u_i\Phi_{3j}\|_{H^{5/2}(\Gamma)}+\|\tilde{n}e^{g_2}e^{-\tilde{c}e^{-g_1}}\nabla\phi\|_{H^{5/2}(\Gamma)}]\\
 \leq &C_0\|u\|_{H^3_x}\|\Phi_3\|_{H^3_x}+C_0\|\tilde{n}e^{g_2}e^{-\tilde{c}e^{-g_1}}\nabla\phi\|_{H^3_x}
 \end{split}
 \enn
 for each $t\in (0,T^*]$. Thus
 \ben
 \begin{split}
 L_2\leq& (\|\na P\|_{H^3_x}+\|\tilde{n}e^{g_2}e^{-\tilde{c}e^{-g_1}}\na \phi)\|_{H^3_x})\|\Phi_3\|_{H^3_x}\\
 \leq &C_0\|u\|_{H^3_x}\|\Phi_3\|_{H^3_x}^2+C_0\|\tilde{n}e^{g_2}e^{-\tilde{c}e^{-g_1}}\nabla\phi\|_{H^3_x}\|\Phi_3\|_{H^3_x}.
 \end{split}
 \enn
 The Sobolev embedding inequality leads to 
 \ben
 L_1\leq C_0\|u\|_{H^3_x}\|\Phi_3\|_{H^3_x}^2.
 \enn
 Substituting the above estimates on $L_1$ and $L_2$ into \eqref{b8} and summing the results with respect to $0\leq |\alpha|\leq 3$, one gets
 \ben
 \frac{d}{dt}\|\Phi_3\|_{H^3_x}
 \leq C_0\|u\|_{H^3_x}\|\Phi_3\|_{H^3_x}+C_0\|\tilde{n}e^{g_2}e^{-\tilde{c}e^{-g_1}}\nabla\phi\|_{H^3_x},
 \enn
 which, in conjunction with Gronwall's inequality, \eqref{b4} and the following fact: 
 \ben
 \begin{split}
 &\|\tilde{n}e^{g_2}e^{-\tilde{c}e^{-g_1}}\nabla\phi\|_{L^1_T{H^3_x}}\\
 \leq &
 C_0T^{\frac{1}{2}}\|\tilde{n}\|_{L^\infty_TH^2_x}\,e^{C_0\|\tilde{c}\|_{L^\infty_TL^\infty_x}}\|\tilde{c}\|_{L^2_TH^3_x}
 +C_0T^{\frac{1}{2}}\|\tilde{n}\|_{L^2_TH^3_x}\,e^{C_0\|\tilde{c}\|_{L^\infty_TL^\infty_x}}(1+\|\tilde{c}\|_{L^\infty_TH^2_x}^2)\\
 \leq &C_0T^{\frac{1}{2}}\|\tilde{n}\|_{L^\infty_TH^2_x}\,e^{C_0\|\tilde{c}\|_{L^\infty_TH^2_x}}\|\tilde{c}\|_{L^2_TH^3_x}
 +C_0T^{\frac{1}{2}}\|\tilde{n}\|_{L^2_TH^3_x}\,e^{C_0\|\tilde{c}\|_{L^\infty_TH^2_x}}(1+\|\tilde{c}\|_{L^\infty_TH^2_x}^2)
 \end{split}
 \enn
 gives rise to
 \be\label{b9}
 \|\Phi_3\|_{L^\infty_TH^3_x}\leq C_0e^{T\|(\tilde{n},\tilde{c},u)\|_{X_T}}
 +C_0T^{\frac{1}{2}}(\|(\tilde{n},\tilde{c},u)\|_{X_T}+\|(\tilde{n},\tilde{c},u)\|_{X_T}^3)e^{C_0(1+T)\|(\tilde{n},\tilde{c},u)\|_{X_T}}.
 \ee
Let $B_R$ be the closed ball in $X_T$ centered at $0$ with radius $R>0$. Then combining \eqref{b6}, \eqref{b7} and \eqref{b9} we have seen that $\Phi=(\Phi_1,\Phi_2,\Phi_3)$ maps $B_R$ into itself if we first pick $R$ large enough (depending initial and boundary data) and then take $T>0$ sufficiently small (depending on $R$). By a straightforward adaptation of the above arguments, one can also deduce a Lipschitz estimate on $\Phi$, with a Lipschitz constant $C(R)T^\beta$ for some $\beta>0$ and constant $C(R)>0$ depending on $R$. Thus by taking $T<C(R)^{-1/\beta}$, we conclude that $\Phi$ is a contraction on $B_R$ and there exists $(\tilde{n},\tilde{c},u)\in B_R$ and an associated $\na p\in L^\infty(0,T;H^2(\Omega))\cap L^2(0,T;H^3(\Omega))$ (see e.g. \cite{ladyzhenskaya1969}), such that $\Phi(\tilde{n},\tilde{c},u)=(\tilde{n},\tilde{c},u)$, which is a solution of the initial boundary value problem \eqref{e01}.
~\\
\textbf{Local well-posedness of \eqref{cns}-\eqref{bc}.} With the local solution $(\tilde{n},\tilde{c},u,\na p)\in B_R\times (L^\infty(0,T;H^2(\Omega))\\ \cap L^2(0,T;H^3(\Omega)))$ of \eqref{e01}, it is straightforward to verify that $(n,c,u, \na p)\in X_T\times (L^\infty(0,T;H^2(\Omega))\\ \cap L^2(0,T;H^3(\Omega)))$ given by \eqref{b10} is a solution to \eqref{cns}-\eqref{bc}. The regularity \eqref{b11} follows from \eqref{b4} and the non-negativity of $n$ and $c$ follows from the maximum principle. Uniqueness of the solutions can be justified by a similar argument used in \cite{winkler2012}.  

\endProof
~\\
~\\ 
\noindent \textbf{Data Availability}. Data sharing is not applicable to this article as no datasets were generated or analysed during the current study.
~\\
\noindent \textbf{Declaration}. There are no relevant financial or non-financial competing interests to report.
~\\
\noindent \textbf{Acknowledgement}. This work is supported by National Natural Science Foundation of China (No. 12471195) and Heilongjiang Provincial Natural Science Foundation of China (No. YQ2024A001).

\bibliography{rf}
\bibliographystyle{plain}

\end{document}